\def\Ddots{\mathinner{\mkern1mu\raise\p@
\vbox{\kern7\p@\hbox{.}}\mkern2mu
\raise4\p@\hbox{.}\mkern2mu\raise7\p@\hbox{.}\mkern1mu}}
\def\XXint#1#2#3{{\setbox0=\hbox{$#1{#2#3}{\int}$}
\vcenter{\hbox{$#2#3$}}\kern-.5\wd0}}
\begin{document}
\newtheorem{theorem}{Theorem}
\newtheorem{proposition}[theorem]{Proposition}
\newtheorem{conjecture}[theorem]{Conjecture}
\def\theconjecture{\unskip}
\newtheorem{corollary}[theorem]{Corollary}
\newtheorem{lemma}[theorem]{Lemma}
\newtheorem{claim}[theorem]{Claim}
\newtheorem{sublemma}[theorem]{Sublemma}
\newtheorem{observation}[theorem]{Observation}
\theoremstyle{definition}
\newtheorem{definition}{Definition}
\newtheorem{notation}[definition]{Notation}
\newtheorem{remark}[definition]{Remark}
\newtheorem{question}[definition]{Question}
\newtheorem{questions}[definition]{Questions}
\newtheorem{example}[definition]{Example}
\newtheorem{problem}[definition]{Problem}
\newtheorem{exercise}[definition]{Exercise}
 \newtheorem{thm}{Theorem}
 \newtheorem{cor}[thm]{Corollary}
 \newtheorem{lem}{Lemma}[section]
 \newtheorem{prop}[thm]{Proposition}
 \theoremstyle{definition}
 \newtheorem{dfn}[thm]{Definition}
 \theoremstyle{remark}
 \newtheorem{rem}{Remark}
 \newtheorem{ex}{Example}
 \numberwithin{equation}{section}
\def\C{\mathbb{C}}
\def\R{\mathbb{R}}
\def\Rn{{\mathbb{R}^n}}
\def\Rns{{\mathbb{R}^{n+1}}}
\def\Sn{{{S}^{n-1}}}
\def\M{\mathbb{M}}
\def\N{\mathbb{N}}
\def\Q{{\mathbb{Q}}}
\def\Z{\mathbb{Z}}
\def\F{\mathcal{F}}
\def\L{\mathcal{L}}
\def\S{\mathcal{S}}
\def\supp{\operatorname{supp}}
\def\essi{\operatornamewithlimits{ess\,inf}}
\def\esss{\operatornamewithlimits{ess\,sup}}

\numberwithin{equation}{section}
\numberwithin{thm}{section}
\numberwithin{theorem}{section}
\numberwithin{definition}{section}
\numberwithin{equation}{section}

\def\earrow{{\mathbf e}}
\def\rarrow{{\mathbf r}}
\def\uarrow{{\mathbf u}}
\def\varrow{{\mathbf V}}
\def\tpar{T_{\rm par}}
\def\apar{A_{\rm par}}

\def\reals{{\mathbb R}}
\def\torus{{\mathbb T}}
\def\scriptm{{\mathcal T}}
\def\heis{{\mathbb H}}
\def\integers{{\mathbb Z}}
\def\z{{\mathbb Z}}
\def\naturals{{\mathbb N}}
\def\complex{{\mathbb C}\/}
\def\distance{\operatorname{distance}\,}
\def\support{\operatorname{support}\,}
\def\dist{\operatorname{dist}\,}
\def\Span{\operatorname{span}\,}
\def\degree{\operatorname{degree}\,}
\def\kernel{\operatorname{kernel}\,}
\def\dim{\operatorname{dim}\,}
\def\codim{\operatorname{codim}}
\def\trace{\operatorname{trace\,}}
\def\Span{\operatorname{span}\,}
\def\dimension{\operatorname{dimension}\,}
\def\codimension{\operatorname{codimension}\,}
\def\nullspace{\scriptk}
\def\kernel{\operatorname{Ker}}
\def\ZZ{ {\mathbb Z} }
\def\p{\partial}
\def\rp{{ ^{-1} }}
\def\Re{\operatorname{Re\,} }
\def\Im{\operatorname{Im\,} }
\def\ov{\overline}
\def\eps{\varepsilon}
\def\lt{L^2}
\def\diver{\operatorname{div}}
\def\curl{\operatorname{curl}}
\def\etta{\eta}
\newcommand{\norm}[1]{ \|  #1 \|}
\def\expect{\mathbb E}
\def\bull{$\bullet$\ }

\def\blue{\color{blue}}
\def\red{\color{red}}

\def\xone{x_1}
\def\xtwo{x_2}
\def\xq{x_2+x_1^2}
\newcommand{\abr}[1]{ \langle  #1 \rangle}

\newcommand{\Norm}[1]{ \left\|  #1 \right\| }
\newcommand{\set}[1]{ \left\{ #1 \right\} }
\newcommand{\ifou}{\raisebox{-1ex}{$\check{}$}}
\def\one{\mathbf 1}
\def\whole{\mathbf V}
\newcommand{\modulo}[2]{[#1]_{#2}}
\def \essinf{\mathop{\rm essinf}}
\def\scriptf{{\mathcal F}}
\def\scriptg{{\mathcal G}}
\def\scriptm{{\mathcal M}}
\def\scriptb{{\mathcal B}}
\def\scriptc{{\mathcal C}}
\def\scriptt{{\mathcal T}}
\def\scripti{{\mathcal I}}
\def\scripte{{\mathcal E}}
\def\scriptv{{\mathcal V}}
\def\scriptw{{\mathcal W}}
\def\scriptu{{\mathcal U}}
\def\scriptS{{\mathcal S}}
\def\scripta{{\mathcal A}}
\def\scriptr{{\mathcal R}}
\def\scripto{{\mathcal O}}
\def\scripth{{\mathcal H}}
\def\scriptd{{\mathcal D}}
\def\scriptl{{\mathcal L}}
\def\scriptn{{\mathcal N}}
\def\scriptp{{\mathcal P}}
\def\scriptk{{\mathcal K}}
\def\frakv{{\mathfrak V}}
\def\C{\mathbb{C}}
\def\D{\mathcal{D}}
\def\R{\mathbb{R}}
\def\Rn{{\mathbb{R}^n}}
\def\rn{{\mathbb{R}^n}}
\def\Rm{{\mathbb{R}^{2n}}}
\def\r2n{{\mathbb{R}^{2n}}}
\def\Sn{{{S}^{n-1}}}
\def\M{\mathbb{M}}
\def\N{\mathbb{N}}
\def\Q{{\mathcal{Q}}}
\def\Z{\mathbb{Z}}
\def\F{\mathcal{F}}
\def\L{\mathcal{L}}
\def\G{\mathscr{G}}
\def\ch{\operatorname{ch}}
\def\supp{\operatorname{supp}}
\def\dist{\operatorname{dist}}
\def\essi{\operatornamewithlimits{ess\,inf}}
\def\esss{\operatornamewithlimits{ess\,sup}}
\def\dis{\displaystyle}
\def\dsum{\displaystyle\sum}
\def\dint{\displaystyle\int}
\def\dfrac{\displaystyle\frac}
\def\dsup{\displaystyle\sup}
\def\dlim{\displaystyle\lim}
\def\bom{\Omega}
\def\om{\omega}
\def\BMO{\rm BMO}
\def\CMO{\rm CMO}

\author[S. Wang]{Shifen Wang}
\address{Shifen Wang:
	School of Mathematical Sciences \\
	Beijing Normal University \\
	Laboratory of Mathematics and Complex Systems \\
	Ministry of Education \\
	Beijing 100875 \\
	People's Republic of China}
\email{wsfrong@mail.bnu.edu.cn}

\author[Q. Xue]{Qingying Xue$^{*}$}
\address{Qingying Xue:
	School of Mathematical Sciences \\
	Beijing Normal University \\
	Laboratory of Mathematics and Complex Systems \\
	Ministry of Education \\
	Beijing 100875 \\
	People's Republic of China}
\email{qyxue@bnu.edu.cn}

\keywords{Schr\"{o}dinger operator, semi-group maximal operator, commutator, compactness.\\
	\indent{\it {2010 Mathematics Subject Classification.}} Primary 42B25,
	Secondary 35J10.}

\thanks{ The second author was supported partly by NNSF of China (Nos. 11671039, 11871101) and NSFC-DFG (No. 11761131002).
	\thanks{$^{*}$ Corresponding author, e-mail address: qyxue@bnu.edu.cn}}

\date{December 3, 2020}
\title[ on weighted Compactness of Commutators ]
{\bf On weighted Compactness of Commutator of semi-group maximal function associated to Schr\"odinger operators}

\begin{abstract} 
Let $\mathcal{T}^*$ be the semi-group maximal function associated to the Schr\"odinger operator $-\Delta+V(x)$ with $V$ satisfying an appropriate reverse H\"{o}lder inequality. In this paper, we show that the commutator of $\mathcal{T}^*$ is a compact operator on $L^p(w)$ for $1<p<\infty$ if $b\in \text{CMO}_\theta(\rho)(\mathbb{R}^n)$ and $w\in A_p^{\rho,\theta}(\mathbb{R}^n)$. Here $\text{ CMO}_\theta(\rho)(\mathbb{R}^n)$ denotes the closure of $\mathcal{C}_c^\infty(\mathbb{R}^n)$ in the $\text{BMO}_\theta(\rho)(\mathbb{R}^n)$ (which is larger than the classical $\text{BMO}(\mathbb{R}^n)$ space) topology. The space where $b$ belongs and the weighs class $w$ belongs are more larger than the usual $\text{CMO}(\mathbb{R}^n)$ space and the Muckenhoupt $A_p$ weights class, respectively.

\end{abstract}\maketitle

\section{Introduction}

\medskip
Consider the Schr\"{o}dinger operator
\begin{equation*}
L=-\Delta+V(x)
\end{equation*}
in $\mathbb{R}^n,\ n\geq 3$. Where $\Delta$ is the Laplacian operator on $\mathbb{R}^n$ and the function $V$ is a nonnegative potential belonging to certain reverse H\"{o}lder class $RH_q$ with an exponent $q>n/2$, that is, there exists a constant $C>0$ such that
\begin{equation*}
\Big(\frac{1}{|B(x,r)|}\int_{B(x,r)}V^q(y)dy\Big)^{\frac{1}{q}}\leq C\Big(\frac{1}{|B(x,r)|}\int_{B(x,r)}V(y)dy\Big),
\end{equation*}
for every ball $B(x,r)\subset\mathbb{R}^n$. It is worth pointing out that if $V\in RH_q$ for some $q>1$, then there exists $\epsilon>0$, such that $V\in RH_{q+\epsilon}$ (see \cite{G}). On the other hand, the H\"older inequality gives that $RH_{q_1}\subset RH_{q_2}$ if $q_1\geq q_2>1$. Therefore, the assumption $q>n/2$ is equivalent to the case $q\geq n/2$. Thoughout this paper, we always assume that $V \not\equiv0$ and $V\in RH_q$ with $q\geq n/2$. 

The Schr\"odinger operator $L$ with nonnegative potentials is very useful in the study of certain subelliptic operators. For instance, by taking the partial Fourier transform in the $t$ variable, the operator $-\Delta_x-V(x)\partial^2_t$ is reduced to $-\Delta_x+V(x)\xi^2$ (See \cite{S}). Some basic results on $L$, including certain estimates of the fundamental solutions of $L$ and the boundedness on $L^p$ of Riesz transforms $\Delta L^{-1/2}$ were obtained by Fefferman \cite{F}, Shen \cite{S1} and Zhong \cite{Z}. 

Attentions have also been paid to the study of function spaces associated to $L$.
It was Dziuba\'nski and Zienkiewicz \cite{DZ2} who characterized the Hardy space $H^1_L$ related to the Schr\"odinger operator. Later on, for $0<p\leq 1$, the $H^p_L$ space with potentials from reverse H\"older classes were studied in \cite{DZ} and \cite{DZ1}. Subsequently, Yang et al. \cite{YYZ} characterize the localized Hardy spaces by establishing the boundedness of  Riesz transforms, maximal operators and endpoint estimates of fractional integrals associated with $L$.

For the classical Schr\"odinger operators $L$ , there are many interesting results of its associated Riesz transforms, which essentially and heavily depend on the properties of $e^{-tL}$. The properties of semi-group $e^{-tL}$ such as the positivity, Gaussian estimates and off-diagonal estimates play a fundamental role in the study of Riesz transform. The maximal function defined by the semigroup $e^{-tL}\ (t>0)$ or the Riesz transforms $\Delta L^{-1/2}$ were further generalized by Lin, et. al. \cite{LLLY} to the setting of Heisenberg groups.

In order to introduce more results, we need to give some definitions.
The semi-group maximal function associated to the Schr\"odinger operator $L$ is defined by
\begin{equation}\label{1.1}
\mathcal{T}^*(f)(x)=\sup_{t>0}|e^{-tL}f(x)|=\sup_{t>0}\Big|\int_{\mathbb{R}^n}k_t(x,y)f(y)dy\Big|,
\end{equation}
where $k_t$ is the kernel of the operator $e^{-tL}$.
As in \cite{S1}, we will use the auxiliary function $\rho$ defined for $\mathbb{R}^n$ as
\begin{equation}\label{1.2}
\rho(x)=\sup_{r>0}\Big\{r:\frac{1}{r^{n-2}}\int_{B(x,r)}V(y)dy\leq1\Big\}.
\end{equation}

\remark
Under the above assumptions on $V$, it is easy to get that $0<\rho(x)<\infty$. In particular, $\rho(x)=1$ with $V=1$. For more details concerning the function $\rho(x)$ and its applications in studying the Schr\"odinger operator $L$, we refer the reader to \cite{F,T,S1}.

For $1<p<\infty$, the $A_p^{\rho,\theta}$ weights class is defined as follows.
\begin{definition}{\bf ($A_p^{\rho,\theta}$ weights class, \cite{BHS1})}.\label{def2.1}
Let $w$ be a nonnegative, locally integrable function on
$\mathbb{R}^n$. For $1<p<\infty$, we say that a weight $w$ belongs to the class $A_p^{\rho,\theta}$ if there exists a positive constant $C$ such that for all balls $B=B(x,r)$, it holds that
\begin{equation}\label{1.3}
\Big(\frac{1}{|B|}\int_Bw(y)dy\Big)\Big(\frac{1}{|B|}\int_Bw(y)^{-1/(p-1)}dy\Big)^{p-1}\leq C\Big(1+\frac{r}{\rho(x)}\Big)^{\theta p}.
\end{equation}
\end{definition}
$w$ is said to satisfy the $ A_1^{\rho,\theta}$ condition if there exists a constant $C$ such that for all balls $B$
\begin{equation*}
M^\theta_V(w)(x)\leq Cw(x),\ a.e. \ x\in\mathbb{R}^n,
\end{equation*}
where
\begin{equation*}
M^\theta_V(f)(x)=\sup_{x\in B}\frac{1}{\Big(1+\frac{r}{\rho(x)}\Big)^{\theta }|B|}\int_B|f(y)|dy.
\end{equation*}

\remark Clearly, the classes $A_p^{\rho,\theta}$ are increasing with $\theta$, and $A_p\subset A_p^{\rho,\theta}$ for $1\leq p<\infty$. Moreover, from the Remark \ref{rem1.1} below, it is easy to see that $A_p\subsetneq A_p^{\rho,\theta}$. 

\begin{definition}{\bf ($\BMO_\theta(\rho)(\rn)$ space, \cite{BHS})}. For $\theta>0$, 
we defined the class $\BMO_\theta(\rho)(\rn)$ of locally integrable functions $f$ such that 
\begin{equation}\label{1.4}
\frac{1}{|B(x,r)|}\int_{B(x,r)}|f(y)-f_B|dy\leq C\Big(1+\frac{r}{\rho(x)}\Big)^{\theta },
\end{equation}
for all $x\in\mathbb{R}^n$ and $r>0$, where $f_B=\frac{1}{|B|}\int_Bb$. A norm for $f\in \BMO_\theta(\rho)(\rn)$, denoted by $\|f\|_{\BMO_\theta(\rho)}$ , is given by the infimum of 
the constants satisfying \eqref{1.4}, after identifying functions that differ upon a constant. 
Clearly $\BMO(\rn)\subset \BMO_\theta(\rho)(\rn)\subset \BMO_{\theta'}(\rho)(\rn)$ for $0<\theta<\theta'$.
\end{definition}
The commutator of $\mathcal{T^*}$ with $b\in\BMO_\theta(\rho)(\rn)$ is defined by
\begin{equation}\label{1.5}
\mathcal{T}_b^*(f)(x)=\sup_{t>0}\Big|\int_{\mathbb{R}^n}k_t(x,y)(b(x)-b(y))f(y)dy\Big|.
\end{equation}

In 2011, Bongioanni, Harboure and Salinas \cite{BHS} considered the $L^p(\mathbb{R}^n)(1 <p<\infty)$ boundedness of the commutators of Riesz transforms related to $L$ with $\BMO_\theta(\rho)(\rn)$ functions. In another paper, they \cite{BHS1} established the weighted boundedness for the semi-group maximal function, Riesz transforms, fractional integrals and Littlewood-Paley functions related to $L$ with weights belong to $A_p^{\rho,\theta}$ (see definition \ref{def2.1}) class which includes the Muckenhoupt weight class $A_p$. 

\medskip
\quad\hspace{-20pt}{\bf Theorem A} (\cite{BHS1}). {\it  For $1<p<\infty$, the operators $\mathcal{T}^*$ is bounded on $L^p(w)$ when $w\in A_p^{\rho,\theta}$.}

\medskip

Recently, Tang \cite{T}  considered the weighted norm inequalities for $\mathcal{T}_b^*$.

\medskip
\quad\hspace{-20pt}{\bf Theorem B} (\cite{T}). {\it  Let $1<p<\infty$, $w\in A_p^{\rho,\theta}$ and $b\in\BMO_\theta(\rho)(\rn)$, then there exists a constant $C$ such that
	
\begin{equation*}
\|\mathcal{T}^*_b(f)\|_{L^p(w)}\leq C\|b\|_{\BMO_\theta(\rho)}\|f\|_{L^p(w)}.
\end{equation*}}

\smallskip

This paper is devoted to studying the weighted compactness for commutators of 
semi-group maximal function related to Schr\"odinger operators. Before stating our 
main results, we recall some background for the compactness of the commutators 
of some classical operators. Given a locally integrable function $b$, 
the commutator $[b,T]$ is defined by $$[b,T](f)(x)=bTf(x)-T(bf)(x).$$ In 1978, Uchiyama \cite{Uch} 
first studied the compactness of commutators and showed that the commutator $[b,T_\Omega]$ 
is compact on $L^p(\mathbb{R}^n)$ for $1<p<\infty$ if and only if 
$b\in{\rm CMO}(\mathbb{R}^n)$, where $T_\Omega$ is a singular integral operator 
with rough kernel $\Omega\in {\rm Lip}_1({\rm S}^{n-1})$ and ${\rm CMO}(\mathbb{R}^n)$ 
is the closure of $\mathcal{C}_c^\infty(\mathbb{R}^n)$ in the ${\rm BMO}(\mathbb{R}^n)$ 
topology. 

Since then, the study on the compactness of commutators of different operators 
has attracted much more attention.  Krantz and Li applied the compactness characterization 
of the commutator $[b,T_\Omega]$ to study Hankel type operators on Bergman space in 
\cite{KL1} and \cite{KL2}.  Wang\cite{W} showed the compactness of  the commutator 
of fractional integral operator form $L^p(\rn)$ to $L^q(\rn)$.  In 2009, Chen and Ding 
\cite{CD1} proved the commutator of singular integrals with variable kernels is compact 
on $L^p(\rn)$ if and only if $b\in\CMO(\mathbb{R}^n)$ and they also establised the compactness of
Littlewood-Paley square functions in \cite{CD2}. Later on, Chen, Ding and Wang \cite{CDW} 
obtained the compactness of commutators for Marcinkiewicz Integral in Morrey Spaces. 
Recently, Liu, Wang and Xue \cite{LWX} showed the compactness of the commutator of oscillatory singular 
integrals with H\"{o}lder class kernels of non-convolutional type. We refer the reader to \cite{LOPTT,X,BT,BDMT,XYY,TX,TXYY} for the compactness of commutators of 
multilinear operators.

The above compactness results are all concerned with the space $\text {CMO}(\mathbb{R}^n)$. However, Theorem B shows that the $L^p$ boundedness holds for more larger space $\text{BMO}_\theta(\rho)(\mathbb{R}^n)$, rather than $\text{BMO}(\mathbb{R}^n)$ and the weights class $A_p^{\rho,\theta}$ is more larger than $A_p$ weights class. Let $\text{ CMO}_\theta(\rho)(\mathbb{R}^n)$ be the closure of $\mathcal{C}_c^\infty(\mathbb{R}^n)$ in the $\BMO_\theta(\rho)(\mathbb{R}^n)$ topology. Then, it is quite natural to ask the following question:
\begin{question}
Is the operator $\mathcal{T}_b^*$ compact from  $L^p(w)$ to $L^p(w)$ when $w\in A_p^{\rho,\theta}$ and $b$ belongs to the space $\text{ CMO}_\theta(\rho)(\mathbb{R}^n)$? 
\end{question}
The main purpose of this paper is to give a firm answer to the above question.
Our result is as follows:

\medskip
\begin{theorem}\label{thm1.1}
Let $1<p<\infty$, $w\in A_p^{\rho,\theta}$ and $b\in\CMO_\theta(\rho)(\rn)$. If $w$ satisfies the following condition
\begin{equation}\label{1.6}
\lim_{A\rightarrow+\infty}A^{-np+n}\int_{|x|>1}\frac{w(Ax)}{{|x|}^{np}}dx=0,
\end{equation}
then the operator $\mathcal{T}^*_b$ defined by \eqref{1.1} is a compact operator from $L^p(w)$ to $L^p(w)$.
\end{theorem}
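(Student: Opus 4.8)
The plan is to reduce the statement to smooth, compactly supported symbols and then apply a weighted Riesz--Kolmogorov (Fréchet--Kolmogorov) compactness criterion in $L^{p}(w)$. The reduction rests on the sublinearity of $b\mapsto\mathcal{T}^{*}_{b}$: for any $b,b'\in\BMO_{\theta}(\rho)(\rn)$ one has the pointwise estimate $|\mathcal{T}^{*}_{b}f(x)-\mathcal{T}^{*}_{b'}f(x)|\le\mathcal{T}^{*}_{b-b'}f(x)$, so Theorem~B yields $\sup_{\|f\|_{L^{p}(w)}\le 1}\|\mathcal{T}^{*}_{b}f-\mathcal{T}^{*}_{b'}f\|_{L^{p}(w)}\le C\|b-b'\|_{\BMO_{\theta}(\rho)}$ with $C$ independent of $b,b'$. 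Since the property ``the image of the unit ball is totally bounded'' is stable under this kind of uniform approximation (even for nonlinear operators), and $\mathcal{C}_c^\infty(\rn)$ is dense in $\CMO_{\theta}(\rho)(\rn)$ by definition, it suffices to treat a fixed $b\in\mathcal{C}_c^\infty(\rn)$ with $\supp b\subset B(0,R_{0})$.

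For such a $b$, put $\mathcal{K}=\{\mathcal{T}^{*}_{b}f:\|f\|_{L^{p}(w)}\le 1\}$ and verify: (i) $\mathcal{K}$ is bounded in $L^{p}(w)$; (ii) $\lim_{a\to\infty}\sup_{g\in\mathcal{K}}\|g\mathbf{1}_{\{|x|>a\}}\|_{L^{p}(w)}=0$; (iii) $\lim_{|h|\to 0}\sup_{g\in\mathcal{K}}\|g(\cdot+h)-g\|_{L^{p}(w)}=0$. Condition (i) is Theorem~B. For (ii), if $|x|>2R_{0}$ then $b(x)=0$, hence $\mathcal{T}^{*}_{b}f(x)=\mathcal{T}^{*}(bf)(x)$, and $|x-y|\ge|x|/2$ for $y\in\supp b$; using the Gaussian domination $\mathcal{T}^{*}\le CM$ (or directly the off-diagonal bounds for $e^{-tL}$) together with $w^{-1/(p-1)}\in L^{1}_{\mathrm{loc}}$ (a consequence of $w\in A^{\rho,\theta}_{p}$) gives $\mathcal{T}^{*}_{b}f(x)\le C_{b}|x|^{-n}\|f\|_{L^{1}(B(0,R_{0}))}\le C_{b}'|x|^{-n}\|f\|_{L^{p}(w)}$ for $|x|>2R_{0}$. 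Raising to the power $p$, integrating against $w$, and observing that the change of variables $y=Ax$ turns hypothesis \eqref{1.6} into exactly $\lim_{a\to\infty}\int_{|y|>a}w(y)|y|^{-np}\,dy=0$, one obtains (ii) uniformly in $f$.

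The main work, and the step I expect to be the principal obstacle, is (iii). Writing $T^{b}_{t}f(x)=\int_{\rn}k_{t}(x,y)(b(x)-b(y))f(y)\,dy$, so that $\mathcal{T}^{*}_{b}f=\sup_{t>0}|T^{b}_{t}f|$, and using $\big|\sup_{t}|a_{t}|-\sup_{t}|c_{t}|\big|\le\sup_{t}|a_{t}-c_{t}|$, I would decompose
\begin{multline*}
T^{b}_{t}f(x+h)-T^{b}_{t}f(x)=\big(b(x+h)-b(x)\big)\,e^{-tL}f(x+h)\\
{}+\int_{\rn}\big(k_{t}(x+h,y)-k_{t}(x,y)\big)\big(b(x)-b(y)\big)f(y)\,dy.
\end{multline*}
The first term is dominated by $\|\nabla b\|_{\infty}|h|\,\mathcal{T}^{*}f(x+h)\le C|h|\,Mf(x+h)$. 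For the second term one invokes the Hölder/gradient regularity of the Schrödinger heat kernel coming from $V\in RH_{q}$ (of the shape $|k_{t}(x+h,y)-k_{t}(x,y)|\le C_{N}(|h|/\sqrt{t})^{\delta}t^{-n/2}e^{-c|x-y|^{2}/t}(1+\sqrt{t}/\rho(x)+\sqrt{t}/\rho(y))^{-N}$ when $|h|\le\sqrt{t}$, and the size bound together with the compact support and Lipschitz character of $b$ when $|h|>\sqrt{t}$); summing over dyadic $t$ bounds the second term by $C|h|^{\delta}$ times a maximal-type operator applied to $f$, the extra $(1+\sqrt{t}/\rho)^{-N}$ factors being precisely what makes the $t$-sum converge (this is where the Schrödinger structure, not the bare heat kernel, is used). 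It then remains to take $L^{p}(w)$ norms. The delicate point is that these maximal operators are evaluated at the translated point $x+h$, hence paired with $w(\cdot-h)$, while $A^{\rho,\theta}_{p}$ is not translation invariant; I would handle this by splitting $\rn$ into the far region $\{|x|>2R_{0}+1\}$, treated as in (ii) and absorbed using \eqref{1.6} uniformly for $|h|\le 1$, and a fixed bounded region, on which the translated weight is controlled by local integrability of $w$ and the local ($r\lesssim\rho$) $A_{\infty}$/doubling behaviour built into \eqref{1.3}. This should give $\sup_{\|f\|_{L^{p}(w)}\le 1}\|\mathcal{T}^{*}_{b}f(\cdot+h)-\mathcal{T}^{*}_{b}f\|_{L^{p}(w)}\le C|h|^{\delta}\to 0$, establishing (iii) and hence the theorem.
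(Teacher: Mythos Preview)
Your overall strategy---reduce to $b\in\mathcal{C}_c^\infty(\rn)$ by the sublinearity estimate $|\mathcal{T}^*_bf-\mathcal{T}^*_{b'}f|\le\mathcal{T}^*_{b-b'}f$ and Theorem~B, then verify the three Fr\'echet--Kolmogorov conditions in $L^p(w)$---is exactly the paper's. Your treatment of the tail condition~(ii) via $|x|^{-n}$ decay and hypothesis~\eqref{1.6} is also the same. The differences lie in the equicontinuity step, and one of them is a genuine (though easily repaired) gap.

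First, the paper does not work directly with $\mathcal{T}^*_b$ but with a smooth truncation $\mathcal{T}^*_{b,\gamma}$ obtained by replacing $k_t(x,y)$ with $k_t(x,y)\big(1-\varphi(\gamma^{-1}|x-y|)\big)$, and first shows $\|\mathcal{T}^*_bf-\mathcal{T}^*_{b,\gamma}f\|_{L^p(w)}\le C\gamma\|f\|_{L^p(w)}$. The point of the truncation is that in the regime $\sqrt{t}<|h|$ (your ``size bound'' case) one then only integrates over $|x-y|\ge\gamma/2>2|h|$, so that $|x+h-y|\sim|x-y|$ and the Gaussian in $k_t(x+h,y)$ can be replaced by one centered at $x$. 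Your sketch of this regime would otherwise have to handle $|x-y|<2|h|$ separately and control $k_t(x+h,y)$ there; this is doable but is precisely the extra work the truncation is designed to eliminate.

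Second, and more seriously, your decomposition
\[
T^b_tf(x+h)-T^b_tf(x)=\big(b(x+h)-b(x)\big)e^{-tL}f(x+h)+\int\big(k_t(x+h,y)-k_t(x,y)\big)\big(b(x)-b(y)\big)f(y)\,dy
\]
places the ``easy'' term at $x+h$, forcing you to bound $\|\mathcal{T}^*f(\cdot+h)\|_{L^p(w)}$, which is why you flag the translation of $w$ as ``the delicate point''. Since $A_p^{\rho,\theta}$ is genuinely not translation invariant (the control $\rho(x-h)\gtrsim\rho(x)(1+|h|/\rho(x))^{-k_0}$ from Lemma~\ref{lem2.1} degenerates where $\rho$ is small), your proposed far/near workaround would require a real argument that you have not supplied. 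The paper sidesteps this entirely by using the alternative splitting
\[
k_{t,\gamma}(x+h,y)(b(x+h)-b(y))-k_{t,\gamma}(x,y)(b(x)-b(y))
=[k_{t,\gamma}(x+h,y)-k_{t,\gamma}(x,y)](b(x+h)-b(y))+k_{t,\gamma}(x,y)(b(x+h)-b(x)),
\]
so that the Lipschitz-in-$b$ term is $|b(x+h)-b(x)|\,\mathcal{T}^*_\gamma(|f|)(x)$, evaluated at $x$; Theorem~A then applies directly with no translated weight. In the kernel-difference term the paper likewise avoids $\rho(x+h)$ by using the $\rho(y)$ factor in Lemma~\ref{lem2.5} and relating $\rho(y)$ back to $\rho(x)$ via Lemma~\ref{lem2.1}. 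With this rearrangement everything is dominated by $C(|h|+|h|^\delta)M_{V,\eta}f(x)$, and Lemma~\ref{lem2.4} finishes the job. So your plan is correct in outline, but you should swap to the paper's decomposition (or otherwise justify the translated bound) to close the equicontinuity step.
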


\begin{remark}\label{rem1.1} We give some comments about Theorem \ref{thm1.1}:
\begin{enumerate}
\item The weights class in Theorem \ref{thm1.1} is more larger than the classical Muckenhoupt weights class $A_p$. In fact, if $w\in A_p$, the classical Muckenhoupt weights class, then the condition \eqref{1.6} holds. Let $0<\gamma<\theta$ and $w(x)=(1+|x|)^{-(n+\gamma)}$, it is easy to see that $w$ satisfies \eqref{1.6} and $w(x)\notin A_p$ ($1\leq p<\infty$), but $w\in A_1^{\rho,\theta}\subset A_p^{\rho,\theta}$ ($1<p<\infty$) provided that $V=1$ (see\cite{T}).
\item Obviously, the space $\CMO_\theta(\rho)(\rn)$ where $b$
belongs is more larger than $\CMO(\rn)$ space. 
\end{enumerate}
\end{remark}

The paper is organized as follows. In section \ref{S2} we give some definitions and preliminary lemmas, which are the main ingredients of our proofs. In section \ref{S3} we will give the proof of Theorem \ref{thm1.1} via smooth truncated techniques. The domain of integration will be divided into several cases. In actuality some cases are combinable, but various subcases also arise, which increases the difficulty we need to deal with.

Throughout the paper,  the letter $C$ or $c$, sometimes with certain parameters, 
will stand for positive constants not necessarily the same one at each occurrence, 
but are independent of the essential variables.  $A\sim B$ means that there exists 
constants $C_1>0$ and $C_2>0$ such that $C_2B\leq A\leq C_1B$.

\bigskip

\section{Preliminaries }\label{S2}
\medskip
We first recall some notation. Given
a Lebesgue measurable set $E\subset\mathbb{R}^n$, $|E|$ will denote the Lebesgue measure of $E$. If $B = B(x, r)$ is a ball in $\mathbb{R}^n$ and $\lambda$ is a real number, then $\lambda B$ shall stand for the ball with the same center as $B$ and radiu $\lambda$ times that of $B$. A weight $w$ is a non-negative measurable function on $\mathbb{R}^n$. The measure associated with $w$ is the set function given by $w(E)=\int_E wdx$. For $0<p<\infty$ we denote by $L^p(w)$ the space of all Lebesgue measurable function $f(x)$ such that
$$\|f\|_{L^p(w)}=\Big(\int_{\mathbb{R}^n}|f(x)|^pw(x)dx|\Big)^{1/p}.$$

The auxiliary function $\rho$ enjoys the following property.
\begin{lemma}\label{lem2.1}	{\rm (\rm\cite{S1})}.
There exists $k_0\geq1$ and $C>0$ such that for all $x,\ y\in\rn$,
\begin{equation*}
C^{-1}\rho(x)\Big(1+\frac{|x-y|}{\rho(x)}\Big)^{-k_0}\leq\rho(y)\leq C\rho(x)(1+\frac{|x-y|}{\rho(x)}\Big)^{\frac{k_0}{k_0+1}}.
\end{equation*}
In particular, $\rho(x)\sim\rho(y)$ if $|x-y|<C\rho(x)$.
\end{lemma}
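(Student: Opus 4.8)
\textbf{Proof plan for Lemma \ref{lem2.1}.}

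The plan is to derive the two-sided estimate from the two characteristic features of the auxiliary function $\rho$: first, its defining balance relation, namely that $t=\rho(x)$ is, up to constants, the unique scale at which $\frac{1}{t^{n-2}}\int_{B(x,t)}V\sim 1$; and second, the doubling-type self-improvement of the quantity $r\mapsto \frac{1}{r^{n-2}}\int_{B(x,r)}V$ that follows from $V\in RH_q$ with $q\geq n/2$. Concretely, I would first record the elementary monotonicity facts: the function $\psi_x(r):=\frac{1}{r^{n-2}}\int_{B(x,r)}V(y)\,dy$ is continuous in $r$, tends to $0$ as $r\to 0$ (since $V\in L^1_{\mathrm{loc}}$ and $n\geq 3$) and tends to $\infty$ as $r\to\infty$ (since $V\not\equiv 0$); hence $\rho(x)$ is the value where $\psi_x$ crosses $1$, and $\psi_x(\rho(x))=1$, with $\psi_x(r)\leq 1$ for $r\le\rho(x)$ and a reverse comparison for $r$ somewhat larger. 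The reverse Hölder hypothesis yields the crucial quantitative growth estimate: there exist $C>0$ and an exponent $\ell=\ell(n,q)>0$ (one may take $\ell=2-\tfrac{n}{q}>0$ when $q>n/2$, and handle the endpoint by the open-ended self-improvement $RH_q\subset RH_{q+\epsilon}$ quoted in the introduction) such that
\begin{equation*}
\psi_x(\lambda r)\leq C\lambda^{\,n-2+n(1-\frac{1}{q})}\,\psi_x(r)\quad\text{ and }\quad \psi_x(\lambda r)\geq c\,\lambda^{2}\,\psi_x(r)\quad(\lambda\geq 1),
\end{equation*}
which is the standard consequence of $\frac{1}{|B(x,\lambda r)|}\int_{B(x,\lambda r)}V\lesssim \lambda^{n(1-1/q)}\frac{1}{|B(x,r)|}\int_{B(x,r)}V$ (Hölder plus $RH_q$) on the one hand, and of $V\ge 0$ on the other.

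The core of the argument is the comparison of $\rho(x)$ and $\rho(y)$ for two points $x,y$. I would split into the easy regime $|x-y|<c\rho(x)$ and the far regime $|x-y|\ge c\rho(x)$. In the near regime, $B(y,\rho(x))\subset B(x,2\rho(x))$ and $B(x,\rho(x))\subset B(y,2\rho(x))$, so $\psi_y(\rho(x))\sim\psi_x(\rho(x))\sim 1$ by the growth estimates above, forcing $\rho(y)\sim\rho(x)$; this gives the ``in particular'' clause and is also the base case $|x-y|\lesssim\rho(x)$ of the general inequality. In the far regime, set $R=|x-y|$ and write $N=1+\frac{R}{\rho(x)}\ge 1$. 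For the upper bound on $\rho(y)$: I estimate $\psi_y(CN^{a}\rho(x))$ from below by restricting the integral of $V$ to the ball $B(x,\rho(x))\subset B(y, R+\rho(x))\subset B(y, 2N\rho(x))$, obtaining $\psi_y(2N\rho(x))\ge c\,(2N)^{-(n-2)}\int_{B(x,\rho(x))}V = c\,(2N)^{-(n-2)}\,\psi_x(\rho(x))\cdot\rho(x)^{n-2}/\rho(x)^{n-2}\cdots$ — more precisely, tracking the normalizations, $\psi_y(t)$ evaluated at $t\sim N\rho(x)$ is comparable to $N^{-(n-2)}$ times a fixed positive quantity, so choosing $t$ a suitable power of $N$ times $\rho(x)$ makes $\psi_y(t)\ge 1$, which by monotonicity yields $\rho(y)\le t\lesssim N^{\beta}\rho(x)$ with $\beta=\frac{n-2}{n-2+?}$; matching the exponent $\frac{k_0}{k_0+1}$ in the statement fixes the relation between $k_0$ and $n,q$. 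For the lower bound on $\rho(y)$: symmetrically, if $\rho(y)$ were too small, say $\rho(y)<c N^{-k_0}\rho(x)$, then on the ball $B(y,\rho(y))$ one would have $V$ with small local average, but comparing $B(y,N\rho(y))$-type balls back to $B(x,\cdot)$ via the growth estimate and the already-established upper direction produces a contradiction with $\psi_x(\rho(x))=1$; this yields $\rho(y)\ge c N^{-k_0}\rho(x)$, i.e. the left inequality. A clean way to organize both directions simultaneously is to iterate the near-regime comparison along a chain of points $x=z_0,z_1,\dots,z_m=y$ with $|z_{i}-z_{i-1}|\sim\rho(z_{i-1})$ and $m\sim\log N$; each step changes $\rho$ by a bounded factor $\Lambda\in(1,\infty)$, and summing the geometric progression of radii gives $\rho(y)\in[\Lambda^{-m}\rho(x),\Lambda^{m}\rho(x)]$, and one then checks that $\Lambda^{m}\le C N^{k_0/(k_0+1)}$ and $\Lambda^{-m}\ge C^{-1}N^{-k_0}$ for an appropriate $k_0=k_0(n,q,\Lambda)$.

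The main obstacle is pinning down the precise exponents $k_0$ and $\frac{k_0}{k_0+1}$ so that they come out exactly as in the statement: the upper and lower comparisons of $\psi$ have genuinely different growth rates (the lower one controlled by the trivial exponent $2$ coming from $V\ge 0$, the upper one by $n-2+n(1-1/q)$ coming from $RH_q$), so the two sides of the conclusion are necessarily asymmetric, and one must feed the better upper estimate into the right place to get the sub-linear exponent $\frac{k_0}{k_0+1}<1$ on the right while only obtaining the weaker polynomial exponent $k_0$ on the left. A secondary technical point is the $q=n/2$ endpoint, where the naive exponent $2-n/q$ degenerates to $0$; this is handled by invoking $RH_{n/2}\subset RH_{n/2+\epsilon}$ from the introduction, so that one may always assume $q>n/2$ strictly when running the quantitative estimates, absorbing the resulting $\epsilon$-loss into the constant $C$ and the value of $k_0$. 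Everything else — the continuity and limiting behavior of $\psi_x$, the ball inclusions, the Hölder-plus-reverse-Hölder doubling of $\int V$ — is routine. Since this is precisely the statement established by Shen in \cite{S1}, I would in fact present the short proof above and refer to \cite{S1} for the optimal dependence of $k_0$ on the data.
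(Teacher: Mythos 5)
The paper itself offers no proof of Lemma \ref{lem2.1}: it is quoted from Shen \cite{S1} (Lemma 1.4 there), so your sketch has to be measured against Shen's argument. Your general strategy --- work with $\psi_x(r)=r^{-(n-2)}\int_{B(x,r)}V$, use $\psi_x(\rho(x))=1$ and quantitative growth estimates for $\psi_x$ coming from $V\in RH_q$ --- is the right one, but the two displayed estimates that carry the whole plan are incorrect in direction, exponent and attribution. H\"older together with $RH_q$ gives the non-concentration estimate $\psi_x(r)\le C(r/R)^{2-n/q}\psi_x(R)$ for $r<R$ (equivalently $\int_{B(x,r)}V\le C(r/R)^{\,n-n/q}\int_{B(x,R)}V$): it bounds small-ball quantities by large-ball ones, i.e.\ it is a \emph{lower} bound on the growth of $\psi_x$, with exponent $2-n/q$, not $2$. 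Your inequality $\frac{1}{|B(x,\lambda r)|}\int_{B(x,\lambda r)}V\lesssim\lambda^{n(1-1/q)}\frac{1}{|B(x,r)|}\int_{B(x,r)}V$ is the reverse of what H\"older plus $RH_q$ produces and is false in general (test $V(z)=|z|^{2n}\in RH_{n/2}$ at $x=0$); an upper growth bound for $\psi_x$ does hold, but it comes from the doubling of the measure $V\,dx$ (itself a nontrivial consequence of the reverse H\"older inequality), with an exponent governed by the $RH_q$ constant rather than $n(1-1/q)$. Similarly, $V\ge0$ only gives $\psi_x(\lambda r)\ge\lambda^{-(n-2)}\psi_x(r)$; your claim $\psi_x(\lambda r)\ge c\lambda^{2}\psi_x(r)$ fails, e.g., for $V(z)=|z|^{-2+\epsilon}$, which lies in $RH_q$ for suitable $q\ge n/2$ and satisfies $\psi_0(\lambda r)=\lambda^{\epsilon}\psi_0(r)$. (A smaller instance of the same over-reliance on monotonicity: $\psi_x$ need not be monotone, so ``$\psi_x(r)\le 1$ for all $r\le\rho(x)$'' needs the growth lemma, not just the definition of the supremum.)

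The more structural gap is that you never actually produce the asymmetric exponents $k_0$ and $k_0/(k_0+1)$: the upper bound ends with an undetermined ``$\beta=\frac{n-2}{n-2+?}$'', the lower bound is a contradiction argument whose quantitative content is not spelled out, and the chaining alternative cannot deliver the statement --- steps of length $\sim\rho(z_{i-1})$ may shrink geometrically, so the chain need not reach $y$ at all, and a symmetric chain would in any case give the same polynomial exponent on both sides, never a sublinear one. The mechanism that works is: (i) prove one direction with polynomial loss, $\rho(y)\ge C^{-1}\rho(x)(1+|x-y|/\rho(x))^{-k_0}$, by using doubling of $V\,dx$ to transfer mass from $x$ to $y$ at scale $R\sim|x-y|+\rho(x)$ (so $\psi_y(R)\le C(R/\rho(x))^{k_0'}$) and then the non-concentration estimate at $y$, $\psi_y(s)\le C(s/R)^{2-n/q}\psi_y(R)$, to descend to an $s\gtrsim\rho(x)(1+|x-y|/\rho(x))^{-k_0}$ with $\psi_y(s)\le1$; (ii) get the other direction by symmetry: apply (i) with $x$ and $y$ interchanged and solve $\rho(x)\ge C^{-1}\rho(y)(1+|x-y|/\rho(y))^{-k_0}$ for $\rho(y)$, which yields precisely $\rho(y)\le C\rho(x)(1+|x-y|/\rho(x))^{k_0/(k_0+1)}$, since $\alpha=k_0/(k_0+1)$ is conjugate to $k_0$ under this inversion ($\alpha/(1-\alpha)=k_0$). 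This symmetrization step is the sole source of the sublinear exponent and is missing from your plan; with it, and with the corrected growth estimates above, your outline does collapse onto the standard proof in \cite{S1}. The $q=n/2$ endpoint via self-improvement of $RH_q$ and the ``in particular'' clause are fine.
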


$A_p^{\rho,\theta}$ weights class has some properties analogy to $A_p$ weights class for $1\leq p<\infty$.
\begin{lemma}\label{lem2.2}	{\rm (\rm \cite{BHS1}\cite{T1})}.
Let $1<p<\infty$ and $w\in A_p^{\rho,\infty}=\bigcup_{\theta\geq0}A_p^{\rho,\theta}$. Then
\begin{enumerate}[{\rm (i)}]
\item If $1\leq p_1<p_2<\infty$, then $A_{p_1}^{\rho,\theta}\subset A_{p_2}^{\rho,\theta}.$
\item $w\in A_p^{\rho,\theta}$ if and only if $w^{-\frac{1}{p-1}}\in A_{p'}^{\rho,\theta}$, where $1/p+1/p'=1$.
\item If  $w\in A_p^{\rho,\infty}$, $1<p<\infty$,  then there exists $\epsilon>0$ such that $w\in A_{p-\epsilon}^{\rho,\infty}$.
\end{enumerate}
\end{lemma}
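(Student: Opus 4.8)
\textbf{Proof plan for Lemma \ref{lem2.2}.}
The plan is to mimic the classical theory of Muckenhoupt weights, keeping careful track of the extra factor $\big(1+\tfrac{r}{\rho(x)}\big)^{\theta p}$ that appears on the right-hand side of \eqref{1.3}. I will treat the three items in order.

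For (i), the monotonicity $A_{p_1}^{\rho,\theta}\subset A_{p_2}^{\rho,\theta}$ for $1\le p_1<p_2<\infty$, the argument is the usual one: writing $A_p$-type quotients with the exponents $-1/(p-1)$, one applies Jensen's (or H\"older's) inequality to compare $\big(\frac1{|B|}\int_B w^{-1/(p_2-1)}\big)^{p_2-1}$ with $\big(\frac1{|B|}\int_B w^{-1/(p_1-1)}\big)^{p_1-1}$, since $\tfrac{1}{p_1-1}\ge\tfrac{1}{p_2-1}$. The point is that the factor $\big(1+\tfrac{r}{\rho(x)}\big)^{\theta p}$ only \emph{increases} in $p$, so the $A_{p_1}^{\rho,\theta}$ bound (with $p_1$ in the exponent) a fortiori gives the $A_{p_2}^{\rho,\theta}$ bound (with $p_2$); the inclusion $p_1=1$ is handled by noting that the $A_1^{\rho,\theta}$ condition $M_V^\theta w\le Cw$ yields $\big(\frac1{|B|}\int_B w\big)\le C\big(1+\tfrac{r}{\rho(x)}\big)^\theta\, \essi_B w$, from which the $A_{p}^{\rho,\theta}$ quotient is controlled.

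For (ii), the duality statement, one observes that $w\in A_p^{\rho,\theta}$ is a condition symmetric under the substitution $w\leftrightarrow \sigma:=w^{-1/(p-1)}$, $p\leftrightarrow p'$: the product of the two averages in \eqref{1.3} raised to the power $1/p$ equals the corresponding product for $\sigma$ raised to the power $1/p'$ (since $\sigma^{-1/(p'-1)}=w$ and $(p-1)(p'-1)=1$), and $\big(1+\tfrac{r}{\rho(x)}\big)^{\theta p}$ becomes $\big(1+\tfrac{r}{\rho(x)}\big)^{\theta p'}$ after taking the reciprocal power. Thus the $A_p^{\rho,\theta}$ constant of $w$ and the $A_{p'}^{\rho,\theta}$ constant of $\sigma$ are linked by an explicit exponent, proving the equivalence.

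Item (iii), the self-improvement (openness) property, is the main obstacle, since it is the analogue of the reverse H\"older / $A_\infty$ self-improvement of Muckenhoupt weights and cannot be obtained by a one-line manipulation. The strategy is to first establish a local reverse H\"older inequality: for a weight $w\in A_p^{\rho,\theta}$ there exist $\delta>0$ and $C$ such that $\big(\frac1{|B|}\int_B w^{1+\delta}\big)^{1/(1+\delta)}\le C\big(1+\tfrac{r}{\rho(x)}\big)^{\theta'}\frac1{|B|}\int_B w$ for all balls $B=B(x,r)$, with $\theta'$ possibly larger than $\theta$. This is proved by the standard Calder\'on--Zygmund stopping-time argument applied on a fixed ball, using that on balls with $r\lesssim\rho(x)$ the auxiliary factor is comparable to $1$ (by Lemma \ref{lem2.1}, $\rho(y)\sim\rho(x)$ there) so the classical proof applies verbatim, while for $r\gtrsim\rho(x)$ one decomposes the ball into a bounded-overlap family of ``critical'' sub-balls of radius $\sim\rho(x)$ and sums; the number of such sub-balls needed is controlled by a power of $\big(1+\tfrac{r}{\rho(x)}\big)$, which accounts for the loss $\theta\rightsquigarrow\theta'$. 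Once the reverse H\"older inequality is available, applying it to the dual weight $\sigma=w^{-1/(p-1)}$ (which lies in $A_{p'}^{\rho,\theta}$ by (ii)) gives a $\delta>0$ with $\sigma\in RH_{1+\delta}$ locally; choosing $\epsilon>0$ small enough that $\tfrac{1}{p-\epsilon-1}\le (1+\delta)\tfrac{1}{p-1}$ and applying H\"older's inequality to the $\sigma$-average then upgrades the $A_p^{\rho,\theta}$ condition for $w$ to an $A_{p-\epsilon}^{\rho,\theta''}$ condition for some $\theta''$, i.e. $w\in A_{p-\epsilon}^{\rho,\infty}$. Alternatively — and this is how I would actually write it to keep the exposition short — I would simply cite the reverse H\"older inequality for $A_p^{\rho,\theta}$ weights from \cite{BHS1} or \cite{T1} (where it is proved in exactly this form) and deduce (iii) from it by the H\"older-inequality step just described.
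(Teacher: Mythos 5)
The paper does not prove Lemma \ref{lem2.2} at all: it is quoted verbatim from \cite{BHS1} and \cite{T1}, and the text explicitly attributes item (iii) to \cite{BHS1}. So there is no in-paper argument to compare yours against; what can be judged is whether your reconstruction is sound, and it is. Your treatment of (i) is correct: Jensen's inequality gives $\bigl(\frac1{|B|}\int_B w^{-1/(p_2-1)}\bigr)^{p_2-1}\le\bigl(\frac1{|B|}\int_B w^{-1/(p_1-1)}\bigr)^{p_1-1}$, and since $1+r/\rho(x)\ge1$ the exponent $\theta p_1$ in \eqref{1.3} may be enlarged to $\theta p_2$; the $p_1=1$ endpoint via $\frac1{|B|}\int_B w\le C(1+r/\rho(x))^{\theta}\essi_B w$ is also right. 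Your (ii) is a correct computation: with $\sigma=w^{-1/(p-1)}$ one has $\sigma^{-1/(p'-1)}=w$, the $A_{p'}$ quantity for $\sigma$ is the $1/(p-1)$ power of the $A_p$ quantity for $w$, and $\theta p/(p-1)=\theta p'$, so the constants transform by an explicit power, exactly as you say. For (iii), your route (local reverse H\"older on balls with $r\lesssim\rho(x)$, where Lemma \ref{lem2.1} makes the critical factor comparable to $1$ so the classical Calder\'on--Zygmund argument localizes, then a bounded-overlap covering of large balls by critical balls at the cost of a power of $1+r/\rho(x)$, and finally H\"older applied to the dual weight $\sigma\in A_{p'}^{\rho,\theta}$ with $(p-1)/(p-\epsilon-1)\le 1+\delta$) is precisely the self-improvement mechanism used in the cited sources, and the loss $\theta\rightsquigarrow\theta''$ is harmless because the conclusion only asserts membership in $A_{p-\epsilon}^{\rho,\infty}$. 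The only non-self-contained ingredient is the reverse H\"older inequality itself, which you propose to cite from \cite{BHS1} or \cite{T1}; that is consistent with (indeed slightly more detailed than) the paper's own treatment, which cites the whole lemma. If you were to write the covering step out in full, the one point to be careful about is that the sub-critical balls have centers $y\ne x$, so you need Lemma \ref{lem2.1} to keep $\rho(y)\sim\rho(x)$ up to the $k_0$-powers, which is where the precise exponent $\theta'$ comes from.
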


It should be pointed out that {\rm(iii)} of Lemma \ref{lem2.2} was proved by Bongioanni, Harboure and Salinas in \cite{BHS1}.

\medskip
For convenience, we write $\Psi_\theta(B)=(1+r/\rho(x_0))^\theta$, if $B=B(x,r_0)$. Then $M_V^\theta$ can be rewritten as
\begin{equation*}
M^\theta_V(f)(x)=\sup_{x\in B}\frac{1}{\Psi_\theta(B)|B|}\int_B|f(y)|dy,
\end{equation*}
and the following result holds:

\begin{lemma}\label{lem2.3}	{\rm (\rm\cite{T1})}.
Let $1<p<\infty$ and suppose that $w\in A_p^{\rho,\theta}$. If $p<p_1<\infty$,
then 
\begin{equation*}
\int_{\mathbb{R}^n}|M_V^\theta f(x)|^{p_1}w(x)dx\leq C_p\int_{\mathbb{R}^n}|f(x)|^{p_1}w(x)dx.
\end{equation*}
\end{lemma}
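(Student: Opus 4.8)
The plan is to deduce the inequality for $M_V^\theta$ from the corresponding property of the ordinary Hardy--Littlewood maximal operator $M$, together with the openness of $A_p^{\rho,\theta}$ (part (iii) of Lemma \ref{lem2.2}). First I would recall the pointwise comparison
\[
M_V^\theta f(x)=\sup_{x\in B}\frac{1}{\Psi_\theta(B)\,|B|}\int_B |f(y)|\,dy\le \sup_{x\in B}\frac{1}{|B|}\int_B |f(y)|\,dy = Mf(x),
\]
since $\Psi_\theta(B)=(1+r/\rho(x_0))^\theta\ge 1$. Hence it suffices to show that $M$ is bounded on $L^{p_1}(w)$ whenever $p<p_1<\infty$ and $w\in A_p^{\rho,\theta}$. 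This reduces matters to an unweighted-flavoured statement about classical Muckenhoupt classes once we know $w$ lies in some genuine $A_{p_1}$.

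The key step is therefore to verify that $w\in A_p^{\rho,\theta}$ with $p<p_1$ implies $w\in A_{p_1}$ (the classical Muckenhoupt class). By part (iii) of Lemma \ref{lem2.2} (the reverse--H\"older / openness property, attributed to Bongioanni--Harboure--Salinas), there is $\epsilon>0$ so that $w\in A_{p-\epsilon}^{\rho,\infty}$; choosing $\epsilon$ small enough that $p-\epsilon\ge 1$ and noting $A_{p-\epsilon}^{\rho,\infty}\subset A_{p}^{\rho,\infty}$ by monotonicity (part (i)), we still control the $A_p^{\rho,\theta'}$ constant of $w$ for some $\theta'$. The point is that the exponent gap $p_1-p>0$ can absorb the local factor $(1+r/\rho(x_0))^{\theta p}$: for a ball $B=B(x_0,r)$, if $r\le \rho(x_0)$ then $\Psi_\theta(B)\sim 1$ and the $A_p^{\rho,\theta}$ condition is just the classical $A_p$ condition on $B$, hence a fortiori the $A_{p_1}$ condition on $B$; if $r>\rho(x_0)$, then $(1+r/\rho(x_0))^{\theta p}\lesssim (r/\rho(x_0))^{\theta p}$, and one splits $B$ into a bounded overlapping family of subballs of radius comparable to $\rho(x_0)$ (using Lemma \ref{lem2.1} so that $\rho$ is essentially constant on each), on each of which the classical $A_p$ bound holds; reassembling and using the strict inequality $p_1>p$ to beat the polynomial growth in $r/\rho(x_0)$ yields a uniform $A_{p_1}$ bound. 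Thus $w\in A_{p_1}$.

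Once $w\in A_{p_1}$, the classical Muckenhoupt theorem gives $\|Mf\|_{L^{p_1}(w)}\le C_p\|f\|_{L^{p_1}(w)}$, and combining with the pointwise bound $M_V^\theta f\le Mf$ finishes the proof. I expect the main obstacle to be the second step: carefully passing from the $\rho$-localized $A_p^{\rho,\theta}$ condition to a genuine $A_{p_1}$ condition, i.e. showing that the extra factor $(1+r/\rho(x_0))^{\theta p}$ on large balls is harmless once one has a little room in the integrability exponent. This is where the decomposition of large balls into subballs of size $\rho(x_0)$, the stability of $\rho$ from Lemma \ref{lem2.1}, and the openness property (iii) of Lemma \ref{lem2.2} all have to be used in concert; the rest is routine.
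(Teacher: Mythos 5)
Your argument has a fatal gap at its central step: the claim that $w\in A_p^{\rho,\theta}$ with $p<p_1$ forces $w\in A_{p_1}$ (classical Muckenhoupt) is false, and the paper itself supplies the counterexample. By Remark \ref{rem1.1}, for $V=1$ (so $\rho\equiv 1$) and $0<\gamma<\theta$, the weight $w(x)=(1+|x|)^{-(n+\gamma)}$ lies in $A_1^{\rho,\theta}\subset A_p^{\rho,\theta}$ but in no classical $A_q$ class for any $q\in[1,\infty)$: for balls $B(0,R)$ with $R$ large, the $A_q$ product behaves like $R^{\gamma}\to\infty$ no matter how large $q$ is. The point you miss is that the extra factor $\bigl(1+r/\rho(x_0)\bigr)^{\theta p}$ in \eqref{1.3} is unbounded in the radius $r$ for a fixed center, and enlarging the integrability exponent from $p$ to $p_1$ only improves the dual-weight average $\bigl(\frac1{|B|}\int_B w^{-1/(p_1-1)}\bigr)^{p_1-1}$ ball by ball; it cannot eliminate growth in $r/\rho(x_0)$ at large scales. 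Your sketched decomposition of a large ball into subballs of radius $\rho(x_0)$ gives local $A_p$ control, but reassembling via H\"older still leaves the polynomial factor in $r/\rho(x_0)$, as the counterexample shows it must. Consequently your first reduction, $M_V^\theta f\le Mf$, is also too lossy: boundedness of the Hardy--Littlewood maximal operator $M$ on $L^{p_1}(w)$ is \emph{equivalent} to $w\in A_{p_1}$, so $M$ is genuinely unbounded on $L^{p_1}(w)$ for the weight above, and any proof must retain the damping factor $\Psi_\theta(B)^{-\theta}$ in the definition of $M_V^\theta$ rather than discard it.

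For comparison, the paper does not prove this lemma at all; it quotes it from Tang \cite{T1} (see also \cite{BHS1}). The known proofs work with the damped operator directly, exploiting a covering of $\mathbb{R}^n$ by critical balls $B(x_k,\rho(x_k))$ on which, via Lemma \ref{lem2.1}, the $A_p^{\rho,\theta}$ condition behaves like a genuine local $A_p$ condition, and using the factor $\Psi_\theta(B)^{-\theta}$ together with the gap $p_1>p$ to control the contribution of balls with $r\gg\rho(x_0)$; the strict inequality $p_1>p$ is needed precisely because, as remarked after Lemma \ref{lem2.3}, $M_V^\theta$ need not be bounded on $L^p(w)$ itself. So the conclusion of the lemma is correct, but it cannot be obtained by the route you propose.
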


By the Lemma \ref{lem2.3},  $M_V^\theta$ may not be bounded on $L^p(w)$ for all $w\in A_p^{\rho,\theta}$ and $1<p<\infty$. So we need the variant maximal operator $M_{V,\eta}$ defined by
\begin{equation*}
M_{V,\eta} f(x)=\sup_{x\in B}\frac{1}{(\Psi_\theta(B))^\eta|B|}\int_B|f(y)|dy,\ \ \ 0<\eta<\infty.
\end{equation*}

We have the following Lemma.
\begin{lemma}\label{lem2.4}	{\rm (\rm\cite{T1})}.
Let $1<p<\infty$, $p'=p/(p-1)$ and suppose that $w\in A_p^{\rho,\theta}$. Then there  exists a constant $C>0$ such that
\begin{equation*}
\|M_{V,p'}f\|_{L^p(w)}\leq C\|f\|_{L^p(w)}.
\end{equation*}
\end{lemma}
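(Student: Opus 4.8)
\textbf{Proof proposal for Lemma \ref{lem2.4}.}

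The plan is to deduce the weighted bound for $M_{V,p'}$ from Lemma \ref{lem2.3} via a duality-and-extrapolation argument rather than attempting a direct estimate. First I would observe that the only obstruction to applying Lemma \ref{lem2.3} directly to the exponent $p$ is the requirement $p<p_1$ there; the weight-$\theta$ correction $\Psi_\theta(B)^{p'}$ built into $M_{V,p'}$ is precisely what compensates for this gap. Indeed, by Lemma \ref{lem2.2}(iii), since $w\in A_p^{\rho,\theta}\subset A_p^{\rho,\infty}$ there exists $\eps>0$ with $w\in A_{p-\eps}^{\rho,\infty}$, hence $w\in A_{p-\eps}^{\rho,\theta'}$ for some $\theta'\geq\theta$. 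Apply Lemma \ref{lem2.3} with the pair of exponents $(p-\eps,p)$ (so that $p_1=p>p-\eps$, which is admissible) and weight $w\in A_{p-\eps}^{\rho,\theta'}$: this yields
\begin{equation*}
\int_{\mathbb{R}^n}\bigl|M_V^{\theta'}f(x)\bigr|^{p}w(x)\,dx\leq C\int_{\mathbb{R}^n}|f(x)|^{p}w(x)\,dx.
\end{equation*}
Since $M_V^\theta\leq M_V^{\theta'}$ pointwise when $\theta\leq\theta'$ (larger correction exponent only decreases the averaged quantity, because $\Psi_{\theta'}(B)\geq\Psi_\theta(B)\geq1$), this already gives $\|M_V^\theta f\|_{L^p(w)}\leq C\|f\|_{L^p(w)}$. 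It then remains to pass from $M_V^\theta$ to $M_{V,p'}$.

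For that passage I would compare the two operators by interpolating the correction exponent. Write $M_{V,p'}f(x)=\sup_{x\in B}\Psi_\theta(B)^{-p'}|B|^{-1}\int_B|f|$ and $M_V^\theta f(x)=\sup_{x\in B}\Psi_\theta(B)^{-1}|B|^{-1}\int_B|f|$. The key point is the elementary inequality, valid on each ball $B$ since $\Psi_\theta(B)\geq1$: if $p'\geq1$ then $\Psi_\theta(B)^{-p'}\leq\Psi_\theta(B)^{-1}$, so in fact $M_{V,p'}f\leq M_V^\theta f$ pointwise whenever $p'\geq1$, i.e.\ always. Combined with the previous display this immediately gives $\|M_{V,p'}f\|_{L^p(w)}\leq\|M_V^\theta f\|_{L^p(w)}\leq C\|f\|_{L^p(w)}$, which is the assertion. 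Thus the whole argument reduces to: (a) upgrade the weight via the open-ended/self-improvement property of $A_p^{\rho,\infty}$ classes, Lemma \ref{lem2.2}(iii); (b) invoke Lemma \ref{lem2.3} at the admissible exponent pair $(p-\eps,p)$; (c) dominate $M_{V,p'}$ by $M_V^\theta$ pointwise.

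The step I expect to require the most care is (a), the bookkeeping of the $\theta$-parameter under the self-improvement. Lemma \ref{lem2.2}(iii) is stated for the union class $A_p^{\rho,\infty}$, so one must check that the resulting membership $w\in A_{p-\eps}^{\rho,\theta'}$ holds for a \emph{finite} $\theta'$ and that one may then harmlessly enlarge $\theta$ to $\max(\theta,\theta')$ throughout (legitimate, since by the Remark the classes $A_p^{\rho,\theta}$ and $\BMO_\theta(\rho)$ are increasing in $\theta$, and $M_V^{\theta}$ is decreasing in $\theta$); one should also confirm that Lemma \ref{lem2.3}'s hypothesis ``$w\in A_p^{\rho,\theta}$'' is meant with the same $\theta$ as in the operator $M_V^\theta$, which is the natural reading. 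Once this alignment of parameters is pinned down, steps (b) and (c) are immediate, and no delicate estimation beyond the pointwise domination $\Psi_\theta(B)^{-p'}\leq\Psi_\theta(B)^{-1}$ is needed.
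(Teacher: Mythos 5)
Your reduction collapses at the monotonicity step. With $\Psi_\theta(B)=(1+r/\rho(x_0))^\theta\geq 1$, enlarging the damping parameter makes the operator \emph{smaller}: for $\theta\leq\theta'$ one has $\Psi_{\theta'}(B)\geq\Psi_\theta(B)$ and hence $M_V^{\theta'}f\leq M_V^{\theta}f$ pointwise --- exactly the opposite of the inequality $M_V^\theta\leq M_V^{\theta'}$ you assert (your own parenthetical justification, ``larger correction exponent only decreases the averaged quantity'', proves the reversed inequality). Consequently the bound you obtain from Lemma \ref{lem2.3} after the self-improvement step, namely $\|M_V^{\theta'}f\|_{L^p(w)}\leq C\|f\|_{L^p(w)}$ with the parameter $\theta'$ furnished by Lemma \ref{lem2.2}(iii), controls only the smaller operator $M_V^{\theta'}$ and says nothing about $M_V^{\theta}$, hence nothing about $M_{V,p'}$ unless you could guarantee $\theta'\leq\theta p'$, i.e. $\Psi_\theta(B)^{p'}\geq\Psi_{\theta'}(B)$. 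Lemma \ref{lem2.2}(iii) is purely qualitative ($w\in A_{p-\eps}^{\rho,\infty}$) and provides no quantitative bound of $\theta'$ in terms of $\theta$, $p$, $\eps$, so this cannot be repaired within your scheme.

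The gap is not a minor slip: if your argument were valid it would show that $M_V^{\theta}$ itself is bounded on $L^p(w)$ for every $w\in A_p^{\rho,\theta}$, whereas the paper introduces $M_{V,\eta}$ precisely because (remark after Lemma \ref{lem2.3}) $M_V^\theta$ may fail to be bounded on $L^p(w)$ at the same exponent; the restriction $p_1>p$ in Lemma \ref{lem2.3} cannot be removed by the $\eps$-improvement trick, since lowering $p$ to $p-\eps$ is paid for by an enlargement of the $\theta$-parameter. Note also that the paper offers no proof of Lemma \ref{lem2.4}: it is quoted from Tang \cite{T1}, where the argument is direct --- on each ball one applies H\"older's inequality together with the $A_p^{\rho,\theta}$ condition \eqref{1.3}, which produces a factor comparable to $\Psi_\theta(B)$ from the term $\big(\frac{1}{|B|}\int_B w^{-p'/p}\big)^{1/p'}$, and the damping $\Psi_\theta(B)^{p'}$ (together with the $\Psi$-adjusted doubling of such, in general non-doubling, weights) is exactly what absorbs these factors. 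So the exponent $p'$ arises from the duality exponent in H\"older's inequality, not from compensating the gap $p<p_1$ in Lemma \ref{lem2.3}. Your step (c), the pointwise bound $M_{V,p'}f\leq M_V^\theta f$, is correct but unhelpful, since the dominating operator is precisely the one whose weighted bound is unavailable.
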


\medskip
We also need the following  properties of the kernel  $k_t$.
\begin{lemma}\label{lem2.5}	{\rm (\rm \cite{DZ},\cite{K})}.
For every $N$, there is a constant $C_N$ such that 
\begin{equation*}
0<k_t(x,y)\leq C_N t^{-\frac{n}{2}}e^{-\frac{|x-y|^2}{5t}}\Big(1+\frac{\sqrt{t}}{\rho(x)}+\frac{\sqrt{t}}{\rho(y)}\Big)^{-N}.
\end{equation*}
\end{lemma}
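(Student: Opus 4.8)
\textbf{Proof strategy for Theorem \ref{thm1.1}.}

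The plan is to establish compactness of $\mathcal{T}^*_b$ on $L^p(w)$ by the classical three-step scheme adapted to the Schr\"odinger setting: (i) approximate $b\in\CMO_\theta(\rho)(\rn)$ by functions $b_j\in\mathcal{C}_c^\infty(\rn)$ in the $\BMO_\theta(\rho)$ norm and use Theorem B to reduce matters to showing $\mathcal{T}^*_{b_j}$ is compact for each fixed smooth compactly supported $b$ (since the operator norm of $\mathcal{T}^*_{b-b_j}$ on $L^p(w)$ is controlled by $C\|b-b_j\|_{\BMO_\theta(\rho)}\to 0$, and a norm-limit of compact operators is compact); (ii) for a fixed $b\in\mathcal{C}_c^\infty$, verify the Fr\'echet--Kolmogorov--Riesz criterion for precompactness of $\mathcal{T}^*_b(U)$ in $L^p(w)$, where $U$ is the unit ball of $L^p(w)$, namely: uniform boundedness (immediate from Theorem B), uniform smallness of the mass near infinity, and uniform equicontinuity of translates. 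To make the translation and tail estimates tractable, introduce the smoothly truncated operators $\mathcal{T}^{*}_{b,\varepsilon,R}$ obtained by inserting a cutoff $\phi_{\varepsilon,R}(x-y)$ into the kernel that vanishes when $|x-y|<\varepsilon$ or $|x-y|>R$; using Lemma \ref{lem2.5} (the Gaussian bound on $k_t$ with the $(1+\sqrt t/\rho)^{-N}$ decay) together with Lemma \ref{lem2.4} and the maximal operators $M_{V,p'}$, $M^\theta_V$, one shows $\|\mathcal{T}^*_b-\mathcal{T}^*_{b,\varepsilon,R}\|_{L^p(w)\to L^p(w)}\to 0$ as $\varepsilon\to 0$, $R\to\infty$, so it suffices to prove precompactness for the truncated operators.

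For the truncated operator the three Fr\'echet--Kolmogorov conditions are checked as follows. \emph{Tail estimate:} if $\mathrm{supp}\,b\subset B(0,r_0)$ and $f\in U$, then for $|x|$ large, $b(x)=0$, so $\mathcal{T}^*_{b,\varepsilon,R}f(x)=\sup_t|\int k_t(x,y)\phi_{\varepsilon,R}(x-y)b(y)f(y)\,dy|$ is supported, as a function of $x$, in an $R$-neighbourhood of $B(0,r_0)$; hence $\int_{|x|>2(r_0+R)}|\mathcal{T}^*_{b,\varepsilon,R}f|^p w\,dx=0$ for $R$ fixed, and the hypothesis \eqref{1.6} is what guarantees (in the limiting argument before truncation, i.e. controlling the genuine tail of $\mathcal{T}^*_b$) that $\int_{|x|>A}|\mathcal{T}^*_bf|^pw\,dx\to 0$ uniformly in $f\in U$ — here one decomposes $f=f\mathbf 1_{|y|\le A/2}+f\mathbf 1_{|y|>A/2}$, bounds the first piece by the pointwise kernel estimate and a change of variables $y\mapsto Ax$ that produces exactly the integral appearing in \eqref{1.6}, and the second by Theorem B applied on the far region. \emph{Equicontinuity:} one estimates $\|\mathcal{T}^*_{b,\varepsilon,R}f(\cdot+h)-\mathcal{T}^*_{b,\varepsilon,R}f(\cdot)\|_{L^p(w)}$ for small $|h|$; using $\sup_t|a_t|-\sup_t|c_t|\le\sup_t|a_t-c_t|$ and $|b(x+h)-b(y)-(b(x)-b(y))|\le\|\nabla b\|_\infty|h|$ together with the smoothness of $\phi_{\varepsilon,R}$ and the gradient estimate on $k_t$ in $x$ (obtained from Lemma \ref{lem2.5} via a standard rescaling/Cauchy-type argument, since $k_t$ is the heat kernel of $L$), one bounds the difference by $C(\varepsilon,R,b)\,|h|^\delta$ times $\|M_{V,p'}f\|_{L^p(w)}$ or a similar maximal average, which is $\lesssim|h|^\delta\|f\|_{L^p(w)}$ by Lemma \ref{lem2.4}; this is uniform over $f\in U$. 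Together with uniform boundedness this gives precompactness of $\mathcal{T}^*_{b,\varepsilon,R}(U)$, hence of $\mathcal{T}^*_b(U)$, completing the proof.

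The main obstacle is the equicontinuity estimate, and more precisely extracting a quantitative Lipschitz- or H\"older-type bound for the kernel difference from the supremum over $t>0$. The difficulty is twofold: first, one only has the pointwise Gaussian bound of Lemma \ref{lem2.5} on $k_t$ itself, not on its $x$-gradient, so one must either invoke the known heat-kernel gradient/regularity estimates for $L$ (which hold under $V\in RH_q$, $q\ge n/2$) or circumvent them by writing $k_t(x+h,y)-k_t(x,y)$ via the semigroup property $e^{-tL}=e^{-(t/2)L}e^{-(t/2)L}$ and exploiting cancellation — and then the resulting bound must be integrated against $|b(y)f(y)|$ uniformly in $t$; second, the region $\varepsilon<|x-y|<R$ must be split into dyadic annuli $|x-y|\sim 2^k\varepsilon$ and the case $|h|\gtrsim|x-y|$ (near the inner cutoff) handled separately, where one loses the gradient and must instead use the size of $k_t$ plus the Lipschitz modulus of $\phi_{\varepsilon,R}$, paying a factor $|h|/\varepsilon$; summing the annuli and balancing the two regimes is where the bulk of the technical work — the "various subcases" alluded to in the introduction — resides. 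The dependence of all constants on $\varepsilon,R$ (allowed to blow up) but not on $f$ is what makes the scheme close.
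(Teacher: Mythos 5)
Your proposal does not address the statement at hand. The statement to be proved is Lemma \ref{lem2.5}, the pointwise Gaussian upper bound for the heat kernel $k_t(x,y)$ of $e^{-tL}$ with the extra decay factor $\bigl(1+\sqrt{t}/\rho(x)+\sqrt{t}/\rho(y)\bigr)^{-N}$. What you have written is a proof strategy for Theorem \ref{thm1.1} (the compactness of the commutator $\mathcal{T}^*_b$ on $L^p(w)$), and that strategy explicitly invokes Lemma \ref{lem2.5} as a known input. With respect to the task of proving Lemma \ref{lem2.5}, this is circular: the estimate you are asked to establish is used as a black box and never derived. (In the paper itself this lemma is not proved either; it is quoted from Dziuba\'nski--Zienkiewicz and Kurata. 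But a proof attempt for it should at least engage with the kernel, not with the commutator.)

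Concretely, a proof of Lemma \ref{lem2.5} would have to proceed along the following lines, none of which appear in your text: first, since $V\ge 0$, the Feynman--Kac (or Trotter product) representation gives the domination $0<k_t(x,y)\le (4\pi t)^{-n/2}e^{-|x-y|^2/(4t)}$ by the free Gaussian kernel, which already yields the bound with $N=0$ (and the slightly worse exponent $5t$ in place of $4t$ absorbs harmless constants); second, the additional decay in $\sqrt{t}/\rho(x)$ and $\sqrt{t}/\rho(y)$ must be extracted from the potential via the perturbation (Duhamel) formula $k_t(x,y)=p_t(x-y)-\int_0^t\int_{\mathbb{R}^n}p_{t-s}(x-z)V(z)k_s(z,y)\,dz\,ds$, iterated together with the defining property of the critical radius $\rho$ in \eqref{1.2} and Lemma \ref{lem2.1}, which is how one gains one factor of $\bigl(1+\sqrt{t}/\rho\bigr)^{-1}$ per iteration and hence arbitrary $N$ at the cost of the constant $C_N$; the symmetry $k_t(x,y)=k_t(y,x)$ then gives the decay in both $\rho(x)$ and $\rho(y)$. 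Your discussion of truncations $\mathcal{T}^*_{b,\varepsilon,R}$, the Fr\'echet--Kolmogorov criterion, and condition \eqref{1.6} belongs to the proof of Theorem \ref{thm1.1} and cannot substitute for this kernel-level argument.
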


\begin{lemma}\label{lem2.6}	{\rm (\rm \cite{DZ1})}.
There exists $0 < \delta <1$ and a constant $c>0$ such that for every $N>0$ there is a constant 
$C_N>0$ so that, for all $|h|\leq\sqrt{t}$
\begin{equation*}
|k_t(x+h)-k_t(x,y)|\leq C_N\Big(\frac{|h|}{\sqrt{t}}\Big)^\delta t^{-\frac{n}{2}}e^{-\frac{c|x-y|^2}{t}}\Big(1+\frac{\sqrt{t}}{\rho(x)}+\frac{\sqrt{t}}{\rho(y)}\Big)^{-N}.
\end{equation*}
\end{lemma}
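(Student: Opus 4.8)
The statement to be proved is Theorem \ref{thm1.1}: for $1<p<\infty$, $w\in A_p^{\rho,\theta}$, $b\in\CMO_\theta(\rho)(\rn)$, and $w$ satisfying the decay condition \eqref{1.6}, the commutator $\mathcal{T}^*_b$ is compact from $L^p(w)$ to $L^p(w)$. The strategy is the standard three-step scheme for compactness of commutators, adapted to the weighted Schr\"odinger setting. First, a density/approximation step: since $b\in\CMO_\theta(\rho)(\rn)$, we may pick $b_j\in\mathcal{C}_c^\infty(\rn)$ with $\|b-b_j\|_{\BMO_\theta(\rho)}\to 0$; by Theorem B (the weighted bound $\|\mathcal{T}^*_b f\|_{L^p(w)}\lesssim\|b\|_{\BMO_\theta(\rho)}\|f\|_{L^p(w)}$), the sublinearity estimate $|\mathcal{T}^*_b f - \mathcal{T}^*_{b_j} f|\le \mathcal{T}^*_{b-b_j}f$ gives $\|\mathcal{T}^*_b-\mathcal{T}^*_{b_j}\|_{L^p(w)\to L^p(w)}\lesssim \|b-b_j\|_{\BMO_\theta(\rho)}\to 0$. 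Since the space of compact operators is closed in operator norm, it suffices to prove that $\mathcal{T}^*_{b_j}$ is compact for each fixed $b\in\mathcal{C}_c^\infty(\rn)$. So from now on assume $b$ is smooth with compact support, say $\supp b\subset B(0,R)$ and $\|\nabla b\|_\infty\le L$.

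\textbf{The truncation and the Fr\'echet--Kolmogorov criterion.} Fix $\delta$ from Lemma \ref{lem2.6}. Introduce a smooth cutoff $\varphi_\varepsilon$ supported where $|x-y|>\varepsilon$ and equal to $1$ where $|x-y|>2\varepsilon$, and define the truncated commutator
\begin{equation*}
\mathcal{T}^*_{b,\varepsilon}f(x)=\sup_{t>0}\Big|\int_{\rn} k_t(x,y)(b(x)-b(y))\varphi_\varepsilon(x-y)f(y)\,dy\Big|.
\end{equation*}
Using $|b(x)-b(y)|\le L|x-y|$ together with the Gaussian bound of Lemma \ref{lem2.5}, the difference $\mathcal{T}^*_b f - \mathcal{T}^*_{b,\varepsilon}f$ is controlled pointwise by $\sup_t\int_{|x-y|\le 2\varepsilon} t^{-n/2} e^{-|x-y|^2/(5t)} L|x-y|\,|f(y)|\,dy \lesssim L\varepsilon\, M f(x)$, or more precisely by $C\varepsilon\, M_{V,p'}f(x)$ after inserting the $(1+\sqrt t/\rho)^{-N}$ factors and comparing with the variant maximal operator; Lemma \ref{lem2.4} then yields $\|\mathcal{T}^*_b-\mathcal{T}^*_{b,\varepsilon}\|_{L^p(w)\to L^p(w)}\lesssim \varepsilon\to 0$. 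Thus it suffices to show each $\mathcal{T}^*_{b,\varepsilon}$ is compact, for which we verify the weighted Fr\'echet--Kolmogorov conditions on the image of the unit ball $\{f:\|f\|_{L^p(w)}\le 1\}$: (a) uniform $L^p(w)$-boundedness (immediate from the previous estimates); (b) uniform smallness of the tails $\int_{|x|>A}|\mathcal{T}^*_{b,\varepsilon}f(x)|^p w(x)\,dx$ as $A\to\infty$; and (c) uniform equicontinuity of translates, $\|\mathcal{T}^*_{b,\varepsilon}f(\cdot+h)-\mathcal{T}^*_{b,\varepsilon}f(\cdot)\|_{L^p(w)}\to 0$ as $|h|\to 0$.

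\textbf{Tail and translation estimates.} For (b), since $\supp b\subset B(0,R)$ and the kernel decays like $t^{-n/2}e^{-|x-y|^2/(5t)}$, when $|x|>A\gg R$ one has $b(x)=0$, so $(b(x)-b(y))=-b(y)$ is supported in $|y|\le R$; then $|x-y|\gtrsim|x|$ and the Gaussian factor gives, after optimizing in $t$, a bound $\lesssim |x|^{-n}\int_{|y|\le R}|f(y)|\,dy \lesssim |x|^{-n}\|f\|_{L^p(w)}\,w(B(0,R))^{-1/p}$ (using H\"older and $w\in A_p^{\rho,\theta}$). Raising to the $p$th power, integrating against $w$ over $|x|>A$, and rescaling $x=Ax'$ produces exactly the expression $A^{-np+n}\int_{|x'|>1}w(Ax')|x'|^{-np}\,dx'$, which tends to $0$ by hypothesis \eqref{1.6}; this is where that condition is used. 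For (c), split $\mathcal{T}^*_{b,\varepsilon}f(x+h)-\mathcal{T}^*_{b,\varepsilon}f(x)$ using the subadditivity of $\sup_t$ into three pieces: the difference of the kernels $k_t(x+h,y)-k_t(x,y)$, which is handled by Lemma \ref{lem2.6} when $|h|\le\sqrt t$ and by the separate Gaussian bounds of Lemma \ref{lem2.5} when $|h|>\sqrt t$ (the region $|x-y|>\varepsilon$ forces $t\gtrsim$ something only in the relevant range, so the $|h|>\sqrt t$ part is itself $O(|h|^{\delta}\varepsilon^{-\delta})$ after elementary estimates); the difference $b(x+h)-b(x)$, bounded by $L|h|$; and the difference $\varphi_\varepsilon(x+h-y)-\varphi_\varepsilon(x-y)$, bounded by $C\varepsilon^{-1}|h|\mathbf 1_{\{ |x-y|\sim\varepsilon\}}$. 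Each piece, after inserting the $(1+\sqrt t/\rho)^{-N}$ weights and dominating by $M_{V,p'}f$ (up to $\varepsilon$-dependent constants), is $\le C(\varepsilon)\,|h|^{\delta}\,M_{V,p'}f(x)$; Lemma \ref{lem2.4} then gives $\|\mathcal{T}^*_{b,\varepsilon}f(\cdot+h)-\mathcal{T}^*_{b,\varepsilon}f\|_{L^p(w)}\le C(\varepsilon)|h|^{\delta}\to 0$ uniformly over the unit ball. Together (a)--(c) give precompactness of $\{\mathcal{T}^*_{b,\varepsilon}f:\|f\|_{L^p(w)}\le1\}$ in $L^p(w)$, hence $\mathcal{T}^*_{b,\varepsilon}$ is compact; letting $\varepsilon\to0$ and then $j\to\infty$ finishes the proof. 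The main obstacle is (c): because $\mathcal{T}^*$ is a \emph{maximal} operator, the $\sup_t$ must be distributed carefully across all three difference terms, and controlling the kernel-difference term requires splitting on $|h|\lessgtr\sqrt t$ and, in the $|h|>\sqrt t$ regime together with $|x-y|>\varepsilon$, arguing directly from Lemma \ref{lem2.5} rather than Lemma \ref{lem2.6}; keeping all constants uniform in $f$ while tracking the $(1+r/\rho)^\theta$ factors through the weighted maximal function $M_{V,p'}$ is the delicate bookkeeping.
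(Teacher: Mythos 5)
The statement you were asked to prove is Lemma \ref{lem2.6}, the H\"older-type smoothness estimate for the heat kernel $k_t(x,y)$ of $e^{-tL}$: for some $0<\delta<1$ and every $N>0$, $|k_t(x+h,y)-k_t(x,y)|\le C_N(|h|/\sqrt t)^\delta t^{-n/2}e^{-c|x-y|^2/t}(1+\sqrt t/\rho(x)+\sqrt t/\rho(y))^{-N}$ whenever $|h|\le\sqrt t$. Your proposal does not address this statement at all: you explicitly set out to prove Theorem \ref{thm1.1} (the weighted compactness of $\mathcal{T}^*_b$), and everything you write --- the $\CMO_\theta(\rho)$ approximation, the smooth truncation, the Fr\'echet--Kolmogorov verification --- is a sketch of the main theorem, which in fact \emph{uses} Lemma \ref{lem2.6} as an input in the equicontinuity step. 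So as a proof of the stated lemma there is nothing to evaluate; the gap is total, since the kernel regularity cannot be derived from the compactness argument (the logical dependence runs the other way).

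For the record, the paper itself offers no proof of Lemma \ref{lem2.6}: it is quoted from Dziuba\'nski--Zienkiewicz \cite{DZ1} (see also Kurata \cite{K} for the companion upper bound, Lemma \ref{lem2.5}). An actual proof would have to work at the level of the semigroup: one writes the perturbation (Duhamel) formula
\begin{equation*}
k_t(x,y)=p_t(x-y)-\int_0^t\int_{\rn}k_{t-s}(x,z)\,V(z)\,p_s(z-y)\,dz\,ds,
\end{equation*}
where $p_t$ is the free Gaussian kernel, and establishes first the upper bound of Lemma \ref{lem2.5} with the factor $(1+\sqrt t/\rho(x)+\sqrt t/\rho(y))^{-N}$ by iterating this identity and using the reverse H\"older condition on $V$ together with the defining property of the auxiliary function $\rho$ in \eqref{1.2} (and Lemma \ref{lem2.1}) to control $\int_{B} V$ on balls of radius comparable to $\sqrt t$. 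The spatial H\"older continuity is then transferred from the explicit smoothness of $p_t$ through the same perturbation formula, the exponent $\delta$ coming from the local regularity theory for $-\Delta+V$ with $V\in RH_q$, $q\ge n/2$; interpolating this local H\"older bound with the Gaussian decay and the $\rho$-decay already proved yields exactly the displayed estimate for $|h|\le\sqrt t$. None of these ingredients appears in your proposal, so it cannot stand as a proof of Lemma \ref{lem2.6}.
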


\medskip
We end this section by introducing the general weighted version of Frechet-Kolmogorov
theorems, which was proved by Xue, Yabuta and Yan in \cite{XYY}.
\begin{lemma}\label{lem2.7}	{\rm (\rm \cite{XYY})}. Let $w$ be a weight on $\mathbb{R}^n$. Assume that $w^{-1/(p_0-1)}$ is also a weight on $\mathbb{R}^n$ for some $p_0>1$. Let $0<p<\infty$ and $\mathcal{F}$ be a subset in $L^p(w)$, then $\mathcal{F}$ is sequentially compact in $L^p(w)$ if the following three conditions are satisfied:
\begin{enumerate}[{\rm (i)}]	
\item $\mathcal{F}$ is bounded, i.e.,
$\sup\limits_{f\in\mathcal{F}}\|f\|_{L^p(w)}<\infty$;
\item $\mathcal{F}$ uniformly vanishes at infinity, i.e.,
\begin{equation*}
\lim\limits_{N\rightarrow\infty}\sup\limits_{f\in\mathcal{F}}\int_{|x|>N}|f(x)|^pw(x)dx=0;
\end{equation*}
\item $\mathcal{F}$ is uniformly equicontinuous, i.e.,
\begin{equation*}
\lim\limits_{|h|\rightarrow0}\sup\limits_{f\in\mathcal{F}}\int_{\mathbb{R}^n}|f(\cdot+h)-f(\cdot)|^pw(x)dx=0.
\end{equation*}
\end{enumerate}
\end{lemma}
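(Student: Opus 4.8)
The plan is to show that $\mathcal{F}$ is totally bounded in $L^p(w)$; since $L^p(w)$ is complete (a Banach space when $p\geq 1$, and a complete metric space for the translation--invariant metric $d(f,g)=\|f-g\|_{L^p(w)}^p$ when $0<p<1$), this forces $\overline{\mathcal{F}}$ to be compact and hence $\mathcal{F}$ to be sequentially compact. The mechanism is the weighted version of the Kolmogorov--Riesz scheme: one approximates each $f\in\mathcal{F}$, uniformly in $f$, by a mollification $f*\phi_\varepsilon$ cut off to a large ball $B(0,N)$ — the two errors being controlled by hypothesis (iii) and hypothesis (ii) respectively — and then applies the Arzel\`a--Ascoli theorem to see that the family of these mollified, truncated functions is precompact. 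I will describe the argument for $1\leq p<\infty$; the range $0<p<1$ is handled identically after replacing the norm by the metric $d$ throughout. As a preliminary observation, H\"older's inequality, combined with the hypothesis that $w^{-1/(p_0-1)}$ is locally integrable, yields a continuous inclusion $L^p(w)\hookrightarrow L^1(B)$ for every ball $B$, say $\|f\|_{L^1(B)}\leq C_B\|f\|_{L^p(w)}$; moreover $w(B)<\infty$ for every ball $B$.

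The first step would be to establish the uniform mollification estimate. Fix $\phi\in\mathcal{C}_c^\infty(\mathbb{R}^n)$ with $\phi\geq 0$, $\supp\phi\subset B(0,1)$ and $\int\phi=1$, and set $\phi_\varepsilon(x)=\varepsilon^{-n}\phi(x/\varepsilon)$. From $f*\phi_\varepsilon(x)-f(x)=\int_{\mathbb{R}^n}\bigl(f(x-y)-f(x)\bigr)\phi_\varepsilon(y)\,dy$ and Minkowski's integral inequality for the measure $w\,dx$ one gets
\begin{equation*}
\|f*\phi_\varepsilon-f\|_{L^p(w)}\leq\int_{\mathbb{R}^n}\|f(\cdot-y)-f(\cdot)\|_{L^p(w)}\,\phi_\varepsilon(y)\,dy\leq\sup_{|y|\leq\varepsilon}\|f(\cdot-y)-f(\cdot)\|_{L^p(w)},
\end{equation*}
and, taking the supremum over $f\in\mathcal{F}$ and using (iii), $\displaystyle\lim_{\varepsilon\to0^+}\sup_{f\in\mathcal{F}}\|f*\phi_\varepsilon-f\|_{L^p(w)}=0$.

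Next I would fix $\eta>0$ and, using this limit, choose $\varepsilon\in(0,1)$ with $\sup_{f\in\mathcal{F}}\|f*\phi_\varepsilon-f\|_{L^p(w)}<\eta/4$, and, using (ii), choose $N$ with $\sup_{f\in\mathcal{F}}\|f\mathbf{1}_{\{|x|>N\}}\|_{L^p(w)}<\eta/4$, so that $\sup_{f\in\mathcal{F}}\|(f*\phi_\varepsilon)\mathbf{1}_{\{|x|>N\}}\|_{L^p(w)}<\eta/2$. For $x,x'\in\overline{B(0,N)}$ and $f\in\mathcal{F}$ (recall $\varepsilon<1$) one has
\begin{equation*}
|f*\phi_\varepsilon(x)|\leq\|\phi_\varepsilon\|_\infty\|f\|_{L^1(B(0,N+1))},\qquad |f*\phi_\varepsilon(x)-f*\phi_\varepsilon(x')|\leq\|\nabla\phi_\varepsilon\|_\infty\|f\|_{L^1(B(0,N+1))}\,|x-x'|,
\end{equation*}
so the preliminary inclusion together with (i) shows the family $\mathcal{G}:=\{(f*\phi_\varepsilon)|_{\overline{B(0,N)}}:f\in\mathcal{F}\}$ is bounded and equicontinuous in $C(\overline{B(0,N)})$; by Arzel\`a--Ascoli it is totally bounded there. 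Since $w(B(0,N))<\infty$, the inclusion $C(\overline{B(0,N)})\hookrightarrow L^p(w;B(0,N))$ is continuous, so $\{(f*\phi_\varepsilon)\mathbf{1}_{B(0,N)}:f\in\mathcal{F}\}$ is totally bounded in $L^p(w)$, and I would pick a finite $(\eta/4)$--net $\{h_1,\dots,h_m\}$ for it. For $f\in\mathcal{F}$, choosing $h_i$ with $\|(f*\phi_\varepsilon)\mathbf{1}_{B(0,N)}-h_i\|_{L^p(w)}<\eta/4$,
\begin{equation*}
\|f-h_i\|_{L^p(w)}\leq\|f-f*\phi_\varepsilon\|_{L^p(w)}+\|(f*\phi_\varepsilon)\mathbf{1}_{\{|x|>N\}}\|_{L^p(w)}+\|(f*\phi_\varepsilon)\mathbf{1}_{B(0,N)}-h_i\|_{L^p(w)}<\eta,
\end{equation*}
so $\{h_1,\dots,h_m\}$ is an $\eta$--net for $\mathcal{F}$; as $\eta>0$ is arbitrary, $\mathcal{F}$ is totally bounded and the proof is finished.

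The only genuinely delicate point is the mollification step and the interplay of the weight with translations: Minkowski's inequality is used there with the same weight $w$ on both sides, which is fine ($w\,dx$ is $\sigma$--finite and the integrand is jointly measurable), but one must know $f(\cdot-y)\in L^p(w)$ for $|y|\leq\varepsilon$. This is itself a consequence of (iii): for $\varepsilon$ small the supremum in (iii) is finite, whence $f(\cdot-y)-f(\cdot)\in L^p(w)$ and so $f(\cdot-y)\in L^p(w)$ for such $y$ — and only such small $y$ appear in $f*\phi_\varepsilon$. For $0<p<1$ one replaces Minkowski by the subadditivity of $d$, and it is the hypothesis on $w^{-1/(p_0-1)}$ that guarantees $L^p(w)\subset L^1_{\mathrm{loc}}$ so that $f*\phi_\varepsilon$ is well defined in the first place. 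Everything downstream of the uniform mollification estimate is soft: a routine reduction to Arzel\`a--Ascoli on a single ball of finite $w$--measure.
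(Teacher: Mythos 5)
The paper does not actually prove this lemma --- it is imported from \cite{XYY} --- so your argument has to stand on its own, and it has a genuine gap: the ``preliminary observation'' that $L^p(w)\hookrightarrow L^1(B)$ for every ball does not follow from the hypotheses. H\"older applied to $\int_B|f|\,dx=\int_B|f|w^{1/p}\cdot w^{-1/p}\,dx$ forces the exponent $p$ on the first factor and therefore needs $w^{-1/(p-1)}\in L^1_{\rm loc}$, whereas the hypothesis only gives $w^{-1/(p_0-1)}\in L^1_{\rm loc}$ for one fixed $p_0>1$; this suffices when $p\ge p_0$, but not when $p<p_0$. What the hypothesis really yields, via H\"older with exponents $p_0,p_0'$, is $\int_B|f|^{p/p_0}\,dx\le\|f\|_{L^p(w)}^{p/p_0}\bigl(\int_Bw^{-1/(p_0-1)}\,dx\bigr)^{1/p_0'}$, i.e.\ an embedding into $L^{p/p_0}(B)$, whose exponent is below $1$ whenever $p<p_0$, in particular for every $0<p\le1$. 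The $L^1(B)$ embedding is in fact false there: take $w\equiv1$ (so $w^{-1/(p_0-1)}$ is a weight for every $p_0$), $0<p<1$ and $f(x)=|x|^{-n}\mathbf{1}_{B(0,1)}(x)$; then $f\in L^p(w)$ but $f\notin L^1(B(0,1))$, and power-weight examples give the same failure for $1\le p<p_0$.

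This is not cosmetic, because everything downstream leans on that embedding: the very definition of $f*\phi_\varepsilon$, the bound $\|f*\phi_\varepsilon\|_{L^\infty(\overline{B(0,N)})}\le\|\phi_\varepsilon\|_\infty\|f\|_{L^1(B(0,N+1))}$ and the Lipschitz bound feeding Arzel\`a--Ascoli all require $L^1(B(0,N+1))$ control that is uniform over $\mathcal{F}$, which you have not established (conditions (i) and (iii) might conceivably supply it, but that would itself need an argument). Moreover, for $0<p<1$ the proposed substitute for Minkowski's integral inequality does not exist: subadditivity of $d(f,g)=\|f-g\|_{L^p(w)}^p$ handles finite sums, but there is no inequality of the form $\bigl(\int g_y\,dy\bigr)^p\lesssim\int g_y^p\,dy$ (Jensen goes the other way), so the uniform mollification estimate is unproved exactly in the range where it is most delicate; the remark that ``$0<p<1$ is handled identically'' does not hold up. In summary, your mollification-plus-Arzel\`a--Ascoli scheme is a correct and standard proof whenever $w^{-1/(p-1)}\in L^1_{\rm loc}$ (e.g.\ $p\ge p_0$), which does cover the way the lemma is used in this paper (there $1<p<\infty$ and $w\in A_p^{\rho,\theta}$, so one may take $p_0=p$), but it does not prove the lemma in the stated generality $0<p<\infty$; for that one must exploit the hypothesis through the $L^{p/p_0}_{\rm loc}$ embedding rather than through pointwise-defined convolutions, as is done in \cite{XYY}.
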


\bigskip

\section{Proof of Theorem \ref{thm1.1} }\label{S3}
\medskip

\begin{proof}[Proof of Theorem \ref{thm1.1}]
We shall prove Theorem \ref{thm1.1} via smooth truncated techniques. First, we introduce the following smooth truncated function. Let $\varphi\in C^{\infty}([0,\infty))$ satisfy
\begin{eqnarray}\label{3.1}
0\leq\varphi\leq1\ \ \ and\ \ \
\varphi(x)=
\begin{cases}
1,          &x\in[0,1],\\
0,          &x\in[2,\infty).
\end{cases}
\end{eqnarray}
For any $\gamma>0$, let
\begin{equation}\label{3.2}
k_{t,\gamma}(x,y)=k_t(x,y)\Big(1-\varphi(\gamma^{-1}|x-y|)\Big).
\end{equation}
Define
\begin{equation}\label{3.3}
\mathcal{T}^*_\gamma f(x)=\displaystyle \sup_{t>0}\Big|\int_{\rn}k_{t,\gamma}(x,y)f(y)dy\Big|.
\end{equation}
and
\begin{equation}\label{3.4}
\mathcal{T}^*_{b,\gamma} f(x)=\displaystyle \sup_{t>0}\Big|\int_{\rn}k_{t,\gamma}(x,y)(b(x)-b(y))f(y)dy\Big|.
\end{equation}
	
For any $b\in\mathcal{C}_c^\infty(\rn)$ and $\gamma,\theta,\ \eta>0$, by \eqref{3.2}, \eqref{3.4} and lemma \ref{lem2.5} with $N=\theta\eta$, one has
\begin{align}\label{3.5}
\begin{split}
|\mathcal{T}^*_{b} f(x)-\mathcal{T}^*_{b,\gamma} f(x)|&\leq C\gamma\displaystyle\sup_{t>0}\int_{|x-y|<2\gamma}t^{-n/2}e^{-\frac{|x-y|^2}{5t}}\Big(1+\frac{\sqrt{t}}{\rho(x)}\Big)^{-\theta\eta}|f(y)|dy\\
&\leq C\gamma\displaystyle\Big\{\sup_{\sqrt{t}<\gamma}\int_{|x-y|<\sqrt{t}}t^{-n/2}e^{-\frac{|x-y|^2}{5t}}\Big(1+\frac{\sqrt{t}}{\rho(x)}\Big)^{-\theta\eta}|f(y)|dy\\
&\quad+\displaystyle\sup_{\sqrt{t}<\gamma}\int_{\sqrt{t}\leq|x-y|<2\gamma}t^{-n/2}e^{-\frac{|x-y|^2}{5t}}\Big(1+\frac{\sqrt{t}}{\rho(x)}\Big)^{-\theta\eta}|f(y)|dy\\
&\quad+\displaystyle\sup_{\sqrt{t}\geq\gamma}\int_{|x-y|<2\gamma}t^{-n/2}e^{-\frac{|x-y|^2}{5t}}\Big(1+\frac{\sqrt{t}}{\rho(x)}\Big)^{-\theta\eta}|f(y)|dy\Big\}\\
&=:C\gamma\{J_1+J_2+J_3\}.
\end{split}
\end{align}
	
One may obtain
\begin{align}\label{3.6}
\begin{split}
J_1&\leq\displaystyle \sup_{\sqrt{t}<\gamma}t^{-n/2}\Big(1+\frac{\sqrt{t}}{\rho(x)}\Big)^{-\theta\eta}\int_{|x-y|<\sqrt{t}}|f(y)|dy\\
&\leq CM_{V,\eta}f(x).
\end{split}
\end{align}
and
\begin{align}\label{3.7}
\begin{split}
J_3&\leq 2^{\theta\eta}\displaystyle \sup_{\sqrt{t}\geq\gamma}\gamma^{-n}\Big(1+\frac{2\gamma}{\rho(x)}\Big)^{-\theta\eta}\int_{|x-y|<2\gamma}|f(y)|dy\\
&\leq CM_{V,\eta}f(x).
\end{split}
\end{align}
	
It remains o estimate $J_2$.  Using the estimate $e^{-s}\leq\frac{C}{s^{M/2}}$with $M>n+\theta\eta$ and splitting to annuli, it follows that
\begin{align}\label{3.8}
\begin{split}
J_1&\leq\displaystyle \sup_{\sqrt{t}<\gamma}\sum_{k=1}^{\infty}t^{\frac{M-n}{2}}\Big(1+\frac{\sqrt{t}}{\rho(x)}\Big)^{-\theta\eta}\int_{|x-y|\sim2^k\sqrt{t}}\frac{|f(y)|}{|x-y|^M}dy\\
&\leq\displaystyle \sup_{\sqrt{t}<\gamma}\sum_{k=1}^{\infty}\frac{2^{-k(M-n-\theta\eta)}}{(2^k\sqrt{t})^n\Big(1+\frac{2^k\sqrt{t}}{\rho(x)}\Big)^{\theta\eta}}\int_{|x-y|<2^k\sqrt{t}}|f(y)|dy\\
&\leq CM_{V,\eta}f(x).
\end{split}
\end{align}
	
Combing \eqref{3.8} with \eqref{3.5}, \eqref{3.6} and \eqref{3.7} may lead to
\begin{equation*}
|\mathcal{T}^*_{b} f(x)-\mathcal{T}^*_{b,\gamma} f(x)|\leq C\gamma M_{V,\eta}f(x).
\end{equation*}
Then Lemma \ref{lem2.4} with $p'\leq\eta<\infty$ gives that
\begin{equation*}
\|\mathcal{T}^*_{b}f-\mathcal{T}^*_{b,\gamma} f\|_{L^p(w)}\leq C\gamma\|f\|_{L^p(w)},
\end{equation*}
which implies that
\begin{equation}\label{3.9}
\lim_{\gamma\rightarrow 0}\|\mathcal{T}^*_{b}f-\mathcal{T}^*_{b,\gamma}f\|_{L^p(w)}=0.
\end{equation}

On the other hand, if $b\in\CMO_\theta(\rho)(\rn)$, then for any $\epsilon>0$, there exists $b_\epsilon\in\mathcal{C}_c^\infty(\rn)$ such that $\|b-b_\epsilon\|_{\BMO_\theta(\rho)}<\epsilon$, so that
\begin{equation*}
\|\mathcal{T}^*_bf-\mathcal{T}^*_{b_\epsilon} f\|_{L^p(w)}\leq\|\mathcal{T}^*_{b-b_\epsilon}f\|_{L^p(w)}\leq C\|b-b_\epsilon\|_{\BMO_\theta(\rho)}\|f\|_{L^p(w)}\leq C\epsilon.
\end{equation*}
Thus, to prove $\mathcal{T}^*_b$ is compact on 
$L^p(w)$ for any $b\in\CMO_\theta(\rho)$, it suffices to prove that $\mathcal{T}^*_b$ is compact on $L^p(w)$ for any $b\in\mathcal{C}_c^\infty(\mathbb{R}^n)$. By \eqref{3.9} and \cite{Y}, 
it suffices to show that $\mathcal{T}^*_{b,\gamma}$ is compact for any $b\in\mathcal{C}_c^\infty(\mathbb{R}^n)$ when $\gamma>0$ is small enough. To this end, for arbitrary bounded set $F$ in $L^p(w)$, let 
$$\mathcal{F}=\{\mathcal{T}^*_{b,\gamma}f:f\in F\}.$$
Then, we need to show that for $b\in \mathcal{C}_c^\infty(\mathbb{R}^n)$, $\mathcal{F}$ satisfies the conditions$
\rm (i)$-$\rm(iii)$ of Lemma \ref{lem2.7}.
	
From the definition of $k_{t,\gamma}$, we know that $0<k_{t,\gamma}(x,y)\leq k_t(x,y)$, then $\mathcal{T}^*_{\gamma}f(x)\leq\mathcal{T}^*(|f|)(x)$ and $\mathcal{T}^*_{b,\gamma}f(x)\leq\mathcal{T}^*(|f|)(x)$. Hence, the boundedness of $\mathcal{T}^*_{\gamma}$ and $\mathcal{T}^*_{b,\gamma}$ also holds. Thus, we have
\begin{equation*}
\sup\limits_{f\in F}\|\mathcal{T}^*_{b,\gamma}f\|_{L^p(w)}\leq C\sup\limits_{f\in F}\|f\|_{L^p(w)}\leq C,
\end{equation*}
which yields the fact that the set $\mathcal{F}$ is bounded.
	
Assume $b\in\mathcal{C}_c^\infty(\mathbb{R}^n)$ and $\supp(b)\subset B(0,R)$, where $B(0,R)$ is the ball of radius $R$ center at origin in $\mathbb{R}^n$. For any $|x| > A > 2R$, $w \in A_p^{\rho,\theta}$, $1<p<\infty$  and $f\in F$.  By Lemma \ref{lem2.5} and the estimate
$e^{-\frac{|x-y|^2}{5t}}\leq C\frac{t^\frac{n}{2}}{|x-y|^n}$, we have
\begin{equation*}
\begin{split}
|\mathcal{T}^*_{b,\gamma}f(x)|&\leq\displaystyle \sup_{t>0}\int_{|y|<R}k_t(x,y)|b(y)f(y)|dy\\
&\leq C\displaystyle\sup_{t>0}t^{-n/2}\Big(1+\frac{\sqrt{t}}{\rho(x)}\Big)^{-N}\int_{|y|<R}e^{-\frac{|x-y|^2}{5t}}|f(y)|dy\\
&\leq C|x|^{-n}\displaystyle \int_{|y|<R}|f(y)|dy\\
&\leq C|x|^{-n}\|f\|_{L^p(w)}\displaystyle \Big(\int_{|y|<R}w^{-p'/p}(y)dy\Big)^{1/p'}.
\end{split}
\end{equation*}
Therefore
\begin{equation*}
\begin{array}{ll}
\displaystyle\int_{|x|>A}|\mathcal{T}^*_{b,\gamma}f(x)|^pw(x)dx&\leq C\displaystyle\int_{|x|>A}\frac{w(x)}{|x|^{np}}dx\\
&\displaystyle=C\sum_{j=0}^{\infty}\int_{2^jA<|x|<2^{j+1}A}\frac{w(x)}{|x|^{np}}dx\\
&\displaystyle =CA^{-np+n}\int_{|x|>1}\frac{w(Ax)}{{|x|}^{np}}dx.
\end{array}
\end{equation*}
This together with the condition \eqref{1.6} yields that
	
\begin{equation*}
\lim_{A\rightarrow\infty}\int_{|x|>A}|\mathcal{T}^*_{b,\gamma}f(x)|^pw(x)dx=0,
\end{equation*}
whenever $f\in F$.
	
It remains to show that the set $\mathcal{F}$ is uniformly equicontinuous.
It suffices to verify that
\begin{equation}\label{3.10}
\lim\limits_{|h|\rightarrow0}\|\mathcal{T}^*_{b,\gamma}f(h+\cdot)-\mathcal{T}^*_{b,\gamma}f(\cdot)\|_{L^p(w)}=0,
\end{equation}
holds uniformly for $f\in F$.
	
In what follows, we fix $\gamma\in(0,\frac{1}{4})$ and $|h|<\frac{\gamma}{4}$. Then
\begin{align}\label{3.11}
\begin{split}
|\mathcal{T}^*_{b,\gamma}f(x+h)-\mathcal{T}^*_{b,\gamma}f(x)|&\leq\displaystyle \sup_{t>0}\int_{\mathbb{R}^n}|k_{t,\gamma}(x+h,y)-k_{t,\gamma}(x,y)||b(x+h)-b(y)||f(y)|dy\\
& \quad+\displaystyle\sup_{t>0}\int_{\mathbb{R}^n}k_{t,\gamma}(x,y)|b(x+h)-b(x)||f(y)|dy\\
&=:I(x)+II(x).
\end{split}
\end{align}
	
For $II(x)$, it holds that
\begin{equation*}
\begin{array}{ll}
&II(x)=\displaystyle|b(x+h)-b(x)|\sup_{t>0}\int_{\mathbb{R}^n}k_{t,\gamma}(x,y)|f(y)|dy\\
&\qquad\leq C\displaystyle |h|\mathcal{T}^*_\gamma(|f|)(x).
\end{array}
\end{equation*}
Then, by the $L^p(w)$-bounds of $\mathcal{T}^*_\gamma$, we have
\begin{equation}\label{3.12}
\|II\|_{L^p(w)}\leq C|h|\|f\|_{L^p(w)}.
\end{equation}
	
For $I(x)$, we decompose it into two parts
\begin{align}\label{3.13}
\begin{split}
I(x)\leq&\displaystyle \sup_{\sqrt{t}\geq|h|}\int_{\mathbb{R}^n}|k_{t,\gamma}(x+h,y)-k_{t,\gamma}(x,y)||b(x+h)-b(y)||f(y)|dy\\
&\quad+\displaystyle\sup_{\sqrt{t}<|h|}\int_{\mathbb{R}^n}|k_{t,\gamma}(x+h,y)-k_{t,\gamma}(x,y)||b(x+h)-b(y)||f(y)|dy\\
&=:I_1(x)+I_2(x).
\end{split}
\end{align}
	
\textbf {Contribution of $I_1$}.
For $I_1(x)$, if $|h|\leq\sqrt{t}$, then by lemma \ref{lem2.5} and lemma \ref{lem2.6}, we have
\begin{align}\label{3.14}
\begin{split}
|k_{t,\gamma}(x+h,y)-k_{t,\gamma}(x,y)|&\leq|k_t(x+h)-k_t(x,y)|+|k_t(x+h)-k_t(x,y)|\varphi(\gamma^{-1}|x+h-y|)\\
&\quad+k_t(x,y)|\varphi(\gamma^{-1}|x+h-y|)-\varphi(\gamma^{-1}|x-y|)|\\
&\leq C\Big(\frac{|h|}{\sqrt{t}}\Big)^\delta t^{-\frac{n}{2}}e^{-\frac{c|x-y|^2}{t}}\Big(1+\frac{\sqrt{t}}{\rho(x)}+\frac{\sqrt{t}}{\rho(y)}\Big)^{-N}\\
&\quad+C\frac{|h|}{\gamma} t^{-\frac{n}{2}}e^{-\frac{c|x-y|^2}{t}}\Big(1+\frac{\sqrt{t}}{\rho(x)}+\frac{\sqrt{t}}{\rho(y)}\Big)^{-N}.
\end{split}
\end{align}
Therefore, we have
\begin{align}\label{3.15}
\begin{split}
I_1(x)&\leq C\displaystyle\sup_{\sqrt{t}\geq|h|}\int_{\rn}\Big(\Big(\frac{|h|}{\sqrt{t}}\Big)^\delta+\frac{|h|}{\gamma}\Big) t^{-\frac{n}{2}}e^{-\frac{c|x-y|^2}{t}}\Big(1+\frac{\sqrt{t}}{\rho(x)}\Big)^{-N}\\
&\quad\times|b(x+h)-b(y)||f(y)|dy\\
&\leq C\displaystyle\sup_{\sqrt{t}\geq1}\Big\{\int_{|x-y|<\sqrt{t}}+\int_{|x-y|\geq\sqrt{t}}\Big\} \Big(\Big(\frac{|h|}{\sqrt{t}}\Big)^\delta+\frac{|h|}{\gamma}\Big)t^{-\frac{n}{2}}e^{-\frac{c|x-y|^2}{t}}\\
&\quad\times\Big(1+\frac{\sqrt{t}}{\rho(x)}\Big)^{-N}|b(x+h)-b(y)||f(y)|dy\\
&\quad+C\displaystyle\sup_{|h|\leq\sqrt{t}<1}\Big\{\int_{|x-y|<\sqrt{t}}+\int_{|x-y|\geq\sqrt{t}}\Big\}\Big(\Big(\frac{|h|}{\sqrt{t}}\Big)^\delta+\frac{|h|}{\gamma}\Big) t^{-\frac{n}{2}}e^{-\frac{c|x-y|^2}{t}}\\
&\quad\times\Big(1+\frac{\sqrt{t}}{\rho(x)}\Big)^{-N}|b(x+h)-b(y)||f(y)|dy\\
&=:I_{11}(x)+I_{12}(x)+I_{13}(x)+I_{14}(x).
\end{split}
\end{align}
Now, we are in the position to estimate the above four terms.
	
For $I_{11}(x)$, 
if $\sqrt{t}\geq1$, then $t^{-\delta/2}\leq1$. Taking $N=\theta\eta$ for any $\theta,\eta>0$, then we have
\begin{equation}\label{3.16}
\begin{split}
I_{11}(x)&\leq C\gamma^{-1}(|h|^\delta+|h|)\displaystyle\sup_{\sqrt{t}\geq1}\int_{|x-y|<\sqrt{t}}t^{-\frac{n}{2}}\Big(1+\frac{\sqrt{t}}{\rho(x)}\Big)^{-\theta\eta}|f(y)|dy\\
&\leq C\gamma^{-1}(|h|^\delta+|h|)\displaystyle\sup_{\sqrt{t}\geq1}\frac{1}{(\sqrt{t})^n(1+\frac{\sqrt{t}}{\rho(x)})^{\theta\eta}}\int_{|x-y|<\sqrt{t}}t^{-\frac{n}{2}}|f(y)|dy\\
&\leq C\gamma^{-1}(|h|^\delta+|h|) M_{V,\eta}f(x).
\end{split}
\end{equation}
	
In order to estimate $I_{12}(x)$,  we need the following ineqality: for any $M>0$, there exists a constant $C>0$, such that
\begin{equation}\label{3.17}
e^{-\frac{c|x-y|^2}{t}}\leq C\frac{t^{\frac{M}{2}}}{|x-y|^{M}}.
\end{equation}
Using \eqref{3.17} with $M>n+\theta\eta$, splitting into annuli, we obtain
\begin{equation}\label{3.18}
\begin{split}
I_{12}(x)&\leq
C\gamma^{-1}(|h|^\delta+|h|)\displaystyle\sup_{\sqrt{t}\geq1}t^{\frac{M-n}{2}}\Big(1+\frac{\sqrt{t}}{\rho(x)}\Big)^{-\theta\eta}\int_{|x-y|\geq\sqrt{t}}\frac{|f(y)|}{|x-y|^M}dy\\
&\leq
C\gamma^{-1}(|h|^\delta+|h|)\displaystyle\sup_{\sqrt{t}\geq1}\sum_{k=1}^{\infty}\frac{2^{-k(M-n)}}{(2^k\sqrt{t})^n(1+\frac{\sqrt{t}}{\rho(x)})^{\theta\eta}}\int_{|x-y|<2^k\sqrt{t}}|f(y)|dy\\
&\leq
C\gamma^{-1}(|h|^\delta+|h|)\displaystyle\sup_{\sqrt{t}\geq1}\sum_{k=1}^{\infty}\frac{2^{-k(M-n-\theta\eta)}}{(2^k\sqrt{t})^n(1+\frac{2^k\sqrt{t}}{\rho(x)})^{\theta\eta}}\int_{|x-y|<2^k\sqrt{t}}|f(y)|dy\\
&\leq C\gamma^{-1}(|h|^\delta+|h|) M_{V,\eta}f(x).
\end{split}
\end{equation}
	
If $\sqrt{t}<1$, then $t^{-\delta/2}<t^{-1/2}$. For any $\theta,\eta>0$, taking $N=\theta\eta$.
For $I_{13}(x)$, if $|h|\leq\sqrt{t}$, $|x-y|<\sqrt{t}$ and $b\in\mathcal{C}_c^\infty(\rn)$, then 
$$|b(x+h)-b(y)|\leq C|x+h-y|\leq C(|x-y|+|h|)\leq C\sqrt{t}.$$
Then, it follows that
\begin{align}\label{3.19}
\begin{split}
I_{13}(x)&\leq C|h|^\delta\displaystyle\sup_{|h|\leq\sqrt{t}<1}t^{-\frac{n+1}{2}}\Big(1+\frac{\sqrt{t}}{\rho(x)}\Big)^{-\theta\eta}\int_{|x-y|<\sqrt{t}}|b(x+h)-b(y)||f(y)|dy\\
&\quad+C\gamma^{-1}|h|\displaystyle\sup_{|h|\leq\sqrt{t}<1}t^{-\frac{n}{2}}\Big(1+\frac{\sqrt{t}}{\rho(x)}\Big)^{-\theta\eta}\int_{|x-y|<\sqrt{t}}|f(y)|dy\\
&\leq C\gamma^{-1}(|h|^\delta+|h|)\displaystyle\sup_{|h|\leq\sqrt{t}<1}t^{-\frac{n}{2}}\Big(1+\frac{\sqrt{t}}{\rho(x)}\Big)^{-\theta\eta}\int_{|x-y|<\sqrt{t}}|f(y)|dy\\
&\leq C\gamma^{-1}(|h|^\delta+|h|) M_{V,\eta}f(x).
\end{split}
\end{align}
	
For $I_{14}(x)$, if $|x-y|<2^k\sqrt{t}$, $k=1,2,\cdots$, and $|h|\leq\sqrt{t}$, $b\in\mathcal{C}_c^\infty(\rn)$, then
$$|b(x+h)-b(y)|\leq C|x+h-y|\leq C(|x-y|+|h|)\leq C2^k\sqrt{t},$$
which combining with \eqref{3.17} for $M>n+1+\theta\eta$ yields that
\begin{align}\label{3.20}
\begin{split}
I_{14}(x)&\leq C|h|^\delta \displaystyle\sup_{|h|\leq\sqrt{t}<1}t^{\frac{M-n-1}{2}}\Big(1+\frac{\sqrt{t}}{\rho(x)}\Big)^{-\theta\eta}\int_{|x-y|\geq\sqrt{t}}\frac{|f(x)||b(x+h)-b(y)|}{|x-y|^M}dy\\
&\quad+ C\gamma^{-1}|h| \displaystyle\sup_{|h|\leq\sqrt{t}<1}t^{\frac{M-n}{2}}\Big(1+\frac{\sqrt{t}}{\rho(x)}\Big)^{-\theta\eta}\int_{|x-y|\geq\sqrt{t}}\frac{|f(x)|}{|x-y|^M}dy\\
&\leq C|h|^\delta\displaystyle\sup_{|h|\leq\sqrt{t}<1}t^{\frac{M-n-1}{2}}\Big(1+\frac{\sqrt{t}}{\rho(x)}\Big)^{-\theta\eta}\sum_{k=1}^{\infty}\frac{2^k\sqrt{t}}{(2^k\sqrt{t})^M}\int_{|x-y|\sim2^{k}\sqrt{t}}|f(y)|dy\\
&\quad+C\gamma^{-1}|h|\displaystyle\sup_{|h|\leq\sqrt{t}<1}t^{\frac{M-n}{2}}\Big(1+\frac{\sqrt{t}}{\rho(x)}\Big)^{-\theta\eta}\sum_{k=1}^{\infty}\frac{1}{(2^k\sqrt{t})^M}\int_{|x-y|\sim2^{k}\sqrt{t}}|f(y)|dy\\
&\leq C\gamma^{-1}(|h|^\delta+|h|)\displaystyle\sup_{|h|\leq\sqrt{t}<1}\sum_{k=1}^{\infty}\frac{2^{-k(M-n-1-\theta\eta)}}{(2^k\sqrt{t})^n(1+\frac{2^k\sqrt{t}}{\rho(x)})^{\theta\eta}}\int_{|x-y|<2^{k}\sqrt{t}}|f(y)|dy\\
&\leq C\gamma^{-1}(|h|^\delta+|h|) M_{V,\eta}f(x).
\end{split}
\end{align}
	
Sum up \eqref{3.15}, \eqref{3.16}, \eqref{3.18}, \eqref{3.19} and  \eqref{3.20} in all, we get
\begin{equation}\label{3.21}
I_1(x)\leq C\gamma^{-1}(|h|^\delta+|h|) M_{V,\eta}f(x).
\end{equation}
\medskip
	
\textbf {Contribution of $I_2$}.
Next we will estimate $I_2(x)$. When $|x-y|<\frac{\gamma}{2}$ and $|h|<\frac{\gamma}{4}$, then $|x+h-y|<\frac{3\gamma}{4}$.
Hence
$\varphi(\gamma^{-1}|x+h-y|)=1=\varphi(\gamma^{-1}|x-y|).$
This implies
$k_{t,\gamma}(x+h,y)=0=k_{t,\gamma}(x,y).$
For $I_{2}(x)$, we decompose it as follows:
\begin{align}\label{3.22}
\begin{split}
I_{2}(x)&\leq\displaystyle\sup_{\sqrt{t}<|h|<\rho(x)}\int_{|x-y|\geq\frac{\gamma}{2}}|k_{t,\gamma}(x+h,y)-k_{t,\gamma}(x,y)||b(x+h)-b(y)||f(y)|dy\\
&\quad+\displaystyle\sup_{\sqrt{t}<|h|\atop\rho(x)\leq|h|}\int_{|x-y|\geq\frac{\gamma}{2}}|k_{t,\gamma}(x+h,y)-k_{t,\gamma}(x,y)||b(x+h)-b(y)||f(y)|dy\\
&:=I_{21}(x)+I_{22}(x).
\end{split}
\end{align}

For $I_{21}(x)$, since $|x-y|>\frac{\gamma}{2}>2|h|$ and $\sqrt{t}<|h|<\rho(x)$, then $|x+h-y|\sim|x-y|$, $|h|/\rho(x)<1$ and $(|h|/\sqrt{t})^M>1$ for any $M>0$. Choosing $M>n+1+\theta\eta$ and using Lemma \ref{lem2.5} and \eqref{3.17}, we get 
\begin{align}\label{3.23}
\begin{split}
I_{21}(x)&=\displaystyle\sup_{\sqrt{t}<|h|<\rho(x)}\int_{|x-y|\geq\frac{\gamma}{2}}|k_{t,\gamma}(x+h,y)-k_{t,\gamma}(x,y)||b(x+h)-b(y)||f(y)|dy\\
&\leq C\displaystyle \sup_{\sqrt{t}<|h|<\rho(x)}\int_{|x-y|\geq2|h|}t^{-\frac{n}{2}}\Big(\frac{|h|}{\sqrt{t}}\Big)^{M-n}e^{-\frac{c|x-y|^2}{t}}|x-y||f(y)|dy\\
&\leq C|h|^{M-n}\displaystyle \sup_{\sqrt{t}<|h|<\rho(x)}\sum_{k=2}^{\infty}\int_{|x-y|\geq2|h|}\frac{|f(y)|}{|x-y|^{M-1}}dy\\
&\leq C|h|^{M-n}\displaystyle \sup_{\sqrt{t}<|h|<\rho(x)}\sum_{k=2}^{\infty}\frac{1}{(2^k|h|)^{M-1}}\int_{|x-y|\sim2^k|h|}|f(y)|dy\\
&\leq C|h|\displaystyle \sup_{\sqrt{t}<|h|<\rho(x)}\sum_{k=2}^{\infty}\frac{2^{-k(M-n-1-\theta\eta)}}{(2^k|h|)^n2^{k\theta\eta}}\int_{|x-y|<2^k|h|}|f(y)|dy\\
&\leq C|h|M_{V,\eta}f(x).
\end{split}
\end{align}
	
Finally, it remains to consider $I_{22}(x)$. Since $b\in\mathcal{C}_c^\infty(\rn)$, $|x-y|\geq2|h|$, $\sqrt{t}<|h|$, then $|h|/\sqrt{t}>1$ and $|b(x+h)-b(x)|\leq C|x-y|$. In addition, if $|x-y|<2^l\rho(x),\ l=1,2,\cdots$, then by lemma \ref{lem2.1}, we have
$$
\rho(y)\leq C2^{\frac{k_0}{k_0+1}l}\rho(x).
$$
Then, it follows that
$$
\Big(1+\frac{\sqrt{t}}{\rho(x)}+\frac{\sqrt{t}}{\rho(y)}\Big)^{-N}+\Big(1+\frac{\sqrt{t}}{\rho(x+h)}+\frac{\sqrt{t}}{\rho(y)}\Big)^{-N}\leq 2\Big(1+\frac{\sqrt{t}}{\rho(y)}\Big)^{-N}\leq C_N\Big(1+\frac{2^{-\frac{k_0}{k_0+1}l}\sqrt{t}}{\rho(x)}\Big)^{-N}.
$$
Choosing $M, N$ such that $N>M>n+1+(k_0+1)\theta\eta$, and applying lemma \ref{lem2.5} and \eqref{3.17} again, we obtain
\begin{align}\label{3.24}
\begin{split}
I_{22}(x)&\leq C|h|\displaystyle \sup_{\sqrt{t}<|h|\atop\rho(x)\leq|h|}\int_{|x-y|\geq2\rho(x)}t^{-\frac{n+1}{2}}e^{-\frac{c|x-y|^2}{t}}\Big(1+\frac{\sqrt{t}}{\rho(y)}\Big)^{-N}|x-y||f(y)|dy\\
&\leq C|h|\displaystyle \sup_{\sqrt{t}<|h|\atop\rho(x)\leq|h|}t^{\frac{M-n-1}{2}}\sum_{l=2}^{\infty}\Big(1+\frac{2^{-\frac{k_0}{k_0+1}l}\sqrt{t}}{\rho(x)}\Big)^{-N}\int_{|x-y|\sim2^l\rho(x)}\frac{|f(y)|}{|x-y|^{M-1}}dy\\
&\leq C|h|\displaystyle \sup_{\sqrt{t}<|h|\atop\rho(x)\leq|h|}\sum_{l=2}^{\infty}\Big(\frac{\sqrt{t}}{\rho(x)}\Big)^{M-n-1}\Big(1+\frac{2^{-\frac{k_0}{k_0+1}l}\sqrt{t}}{\rho(x)}\Big)^{-N}\frac{2^{-l(M-n-1-\theta\eta)}}{(2^l\rho(x))^n2^{l\theta\eta}}
\end{split}
\end{align}
\begin{align*}
\begin{split}
&\quad\times\int_{|x-y|<2^l\rho(x)}|f(y)|dy\\
&\leq C|h|\displaystyle \sup_{\sqrt{t}<|h|\atop\rho(x)\leq|h|}\sum_{l=2}^{\infty}\frac{2^{-l(\frac{M-n-1}{k_0+1}-\theta\eta)}}{(2^l\rho(x))^n2^{l\theta\eta}}\int_{|x-y|<2^l\rho(x)}|f(y)|dy\\
&\leq C|h|M_{V,\eta}f(x).
\end{split}
\end{align*}
	
Inequality \eqref{3.24} together with \eqref{3.22} and \eqref{3.23} gives that
\begin{equation}\label{3.25}
I_2(x)\leq C|h|M_{V,\eta}f(x).
\end{equation}
Therefore, by \eqref{3.13} and \eqref{3.21} we have
\begin{equation*}
I(x)\leq C(|h|+|h|^\delta)M_{V,\eta}f(x).
\end{equation*}
By Lemma \ref{lem2.4} for any $p'\leq\eta<\infty$, it holds that 
\begin{equation}\label{3.26}
\|I\|_{L^p(w)}\leq C(|h|+|h|^\delta)\|M_{V,\eta}f\|_{L^p(w)}\leq C(|h|+|h|^\delta)\|f\|_{L^p(w)}.
\end{equation}
From \eqref{3.11}, \eqref{3.12} and \eqref{3.26}, we get
$$
\|\mathcal{T}^*_{b,\gamma}f(h+\cdot)-\mathcal{T}^*_{b,\gamma}f(\cdot)\|_{L^p(w)}\leq C(|h|+|h|^\delta)\|f\|_{L^p(w)},
$$
which yields \eqref{3.10} and finishes the proof of Theorem \ref{thm1.1}.
\end{proof}


\begin{thebibliography}{99}
	
	\bibitem{BHS}  B. Bongioanni, E. Harboure and O. Salinas, 
	{\itshape Commutators of Riesz transforms related to Schr\"odinger operators}, J. Fourier Ana Appl. {\bf17} (2011), 115-134.
	
	\bibitem{BHS1} B. Bongioanni, E. Harboure and O. Salinas, 
	{\itshape Class of weights related to Schr\"odinger operators}, J. Math. Anal. Appl. {\bf373} (2011), 563-579.
	
	\bibitem{BT} \'Arp\'ad B\'enyi and R. H. Torres, 
	{\itshape Compact bilinear operators and commutators}, Proc. Amer. Math. Soc. {\bf141}
	(2013), 3609-3621.
	
	\bibitem{BDMT} \'Arp\'ad B\'enyi, W. Damian, K. Moen and R. H. Torres, 
	{\itshape Compactness properties of commutators of bilinear fractional integrals}, Mathematische Zeitschrift, {\bf280} (1-2) (2015), 569-582.
	
	
	\bibitem{CD1} Y. Chen and Y. Ding, 
	{\itshape Compactness of commutators of singular integrals with variable kernels}, (in chinese),
	Chinese Ann. Math. {\bf30(A)} (2009), 201-212.
	
	
	\bibitem{CD2}Y. Chen and Y. Ding,
	{\itshape Compactness characterization of commutators for Littlewood-Paley operators},
	Kodai Math. J. {\bf 32} (2) (2009), 256-323.
	
	\bibitem{CDW} Y. Chen and Y. Ding and X. Wang, 
	{\itshape Compactness for Commutators of Marcinkiewicz Integral in Morrey Spaces},
	Taiwanese Jour. Math. {\bf15} (2011), 633-658.
	
	
	\bibitem{DZ}
	J. Dziuba\'nski, J. Zienkiewicz, 
	{\itshape $H^p$ spaces for Schr\"odinger operators. Fourier Analysis and Related Topics}, Vol. 56, Banach Center Publications. (2002), 45-53.
	
	\bibitem{DZ1}
	J. Dziuba\'nski, J. Zienkiewicz, 
	{\itshape $H^p$ spaces associated with Schr\"odinger operators with potentials from reverse H\"older classes}, Colloq. Math. {\bf98} (1) (2003) 5-38.
	
	\bibitem{DZ2}
	J. Dziuba\'nski, J. Zienkiewicz, 
	{\itshape Hardy space $H^1$ associated to Schr\"odinger operator with potential satisfying reverse H\"older inequality}, Rev. Mat.  Iberoam. {\bf15} (2) (1999), 279-296 .
	
	\bibitem{F}
	C. Fefferman, 
	{\itshape The uncertainty principle}, Bull. Amer. Math. Soc., N.S. {\bf9 }(1983), 129-206.
	
	\bibitem{G}
	F. W. Gehring, 
	{\itshape The $L^p$-integrability of the partial derivatives of a quasiconformal mapping},
	Acta Math. {\bf130} (1973), 265-277. 
	
	\bibitem{K}
	K. Kurata, 
	{\itshape An estimate on the heat kernel of magnetic Schr\"odinger operators and
		uniformly elliptic operators with non-negative potentials}, J. London Math. Soc.  {\bf2} (2000), 885-903.
	
	\bibitem{KL1} S. Krantz and S. Y. Li,
	{\itshape Boundedness and compactness of integral operators on spaces of homogeneous
		type and applications}, I. J. Math. Anal. Appl. {\bf258}
	(2001), 629-641.
	
	\bibitem{KL2} S. Krantz, S. Y. Li,
	{\itshape Boundedness and compactness of integral operators on spaces of homogeneous
		type and applications}, II. J. Math. Anal. Appl. {\bf258}
	(2001), 642-657.
	
	\bibitem{LLLY}
	C. Lin, H. Liu and Y. Liu, 
	{\itshape Bilinear operators associated with Schr\"odinger operators}, Studia Math. {\bf205} (2011), no. 3, 281-295.
	
	\bibitem{LOPTT} A. Lerner, S. Ombrosi, C. Perez, R. Torr\'es, and R. Trujillo-Gonz\'alez,  
	{\itshape New maximal functions and
		multiple weights for the multilinear Calder\'on-Zygmund theory}, Adv. Math. {\bf220} (2009), no. 4, 1222-1264.
	
	\bibitem{LWX}
	F. Liu, S. Wang and Q. Xue,
	{\itshape On oscillatory singular integrals and their commutators with non-convolutional H\"{o}lder class kernels}, preprint.
	
	\bibitem{S}
	H. F. Smith, 
	{\itshape Parametrix construction for a class of subelliptic differential operators}, Duke Math. J.  {\bf63} (1991), 343-354.
	
	\bibitem{S1} 
	Z. Shen, 
	{\itshape $L^p$ estimates for Schr\"odinger operators with certain potentials}, Ann. Inst.
	Fourier. Grenoble, {\bf45} (2) (1995), 513-546.
	
	\bibitem{T} 
	L. Tang, 
	{\itshape Weighted norm inequalities for commutators of Littlewood-Paley functions related to Schr\"odinger operators}, arXiv:1109.0100.
	
	\bibitem{T1} 
	L. Tang, 
	{\itshape Weighted norm inequalities for Schr\"odinger type operators}, Forum Math. {\bf27} (2015), 2491-2532.
	
	\bibitem{TX}
	R. H. Torres and Q. Xue,
	{\itshape On compactness of commutators of multiplication and bilinear pesudodifferential operators and a new subspace of $BMO$}, Rev. Mat. Iberoam. {\bf36} (2020), no 3, 939-956.
	
	\bibitem{TXYY}
	J. Tao, Q. Xue, D. Yang and W. Yuan, 
	{\itshape XMO and weighted compact bilinear commutators}, arXiv:1909.03173v1.
	
	\bibitem{Uch} A. Uchiyama, 
	{\itshape On the compactness of operators of Hankel type}, Tohoku Math. J. (2), {\bf 30} (1) (1978), 163-171.
	
	\bibitem{W} S. Wang, 
	{\itshape The compactness of the commutator of
		fractional integral operator}, (in Chinese), Chin. Ann. Math. {\bf8(A)} (1987), 475-482.
	
	
	\bibitem{X} Q. Xue, 
	{\itshape Weighted estimates for the iterated commutators of multilinear maximal and fractional
		type operators}, Studia Math. {\bf217} (2013), no. 2, 97-122.
	
	\bibitem{Y} K. Yosida, {\itshape Functional Analysis}, Reprint of the sixth (1980) edition, Classics in Mathematics, Springer-Verlag, Berlin, 1995.
	
	\bibitem{XYY}
	Q. Xue, K. Yabuta and J. Yan, 
	{\itshape Weighted Fr\'echet-Kolmogorov theorem and compactness of vector-valued multilinear operators}, arXiv: 1806.06656.
	
	\bibitem{YYZ}
	D. Yang, D. Y and Y. Zhou, 
	{\it Endpoint properties of localized Riesz transforms and fractional integrals associated to Schr\"odinger operators}, Potential Anal. {\bf30} (2009), 271-300.
	
	\bibitem{Z}
	J. Zhong, 
	{\itshape The Sobolev estimates for some Schr\"odinger type operators}, Math. Sci. Res. {\bf3} (1999), 1-48.
















\end{thebibliography}
\end{document}